
\documentclass{birkjour}
%
%
%
 \newtheorem{thm}{Theorem}[section]
 \newtheorem{cor}[thm]{Corollary}
 
 \newtheorem{prop}[thm]{Proposition}
 \theoremstyle{definition}
 \newtheorem{defn}[thm]{Definition}
 \theoremstyle{remark}
 \newtheorem{rem}[thm]{Remark}
 \newtheorem*{ex}{Example}
 \numberwithin{equation}{section}

\usepackage{amsfonts}
\usepackage{amsmath}
\usepackage{amssymb}
\usepackage{amsthm}
\usepackage{graphicx}
\usepackage{tikz}
\usepackage{version}
\usepackage{bm}
\usepackage{enumerate}
\usepackage{epsfig}
\usepackage{caption}%
\usepackage{setspace}
\newcommand{\R}{{\mathbb R}}

\newcommand{\N}{\mathbb{N}}

\newcommand{\cO}{\mathcal{O}}
\newcommand{\bH}{\mathbb{H}}
\DeclareMathOperator{\Lip}{Lip}
\DeclareMathOperator{\diam}{diam}

\DeclareMathOperator{\rows}{rows}

\begin{document}

%
%
%
%
%
%
%
%
%

\title[Trees of Maps and Sequences of Maps between Spaces]
 {Attractors of Trees of Maps and of Sequences of Maps between Spaces with Applications to Subdivision}

\author[Dyn]{Nira Dyn}
\address{%
School of Mathematical Sciences\\ Tel Aviv University\\ Israel}
\email{niradyn@tau.ac.il}

\author[Levin]{David Levin}
\address{%
School of Mathematical Sciences\\ Tel Aviv University\\ Israel}
\email{levindd@gmail.com}

\author[Massopust]{Peter Massopust}
\address{%
Centre of Mathematics\\ Technical University of Munich\\ Germany}
\email{massopust@ma.tum.de}

\subjclass{Primary 47H10; Secondary 28A80, 41A30, 54E50}

\keywords{Fractals, subdivision schemes, fixed points, attractors, function systems}

\date{January 1, 2004}

\begin{abstract}
An extension of the Banach fixed point theorem for a sequence of maps on a complete metric space $(X,d)$ has been presented in a previous paper. It has been shown that 
backward trajectories of maps $X\to X$ converge under mild conditions and that they can generate new types of attractors such as scale dependent fractals.
Here we present two generalisations of this result and some potential applications. First, we study the structure of an infinite tree of maps $X\to X$ and discuss convergence to a unique ``attractor'' of the tree.
We also consider ``staircase'' sequences of maps, that is, we consider a countable sequence of metric spaces $\{(X_i,d_i)\}$ and an associated countable sequence of maps $\{T_i\}$, $T_i:X_{i}\to X_{i-1}$. We examine conditions for the convergence of backward trajectories of the $\{T_i\}$ to a unique attractor. An example of such trees of maps are trees of function systems leading to the construction of fractals which are both scale dependent and location dependent. The staircase structure facilitates linking all types of linear subdivision schemes to attractors of function systems.
\end{abstract}

\maketitle

\section{Introduction and Preliminaries}\label{sect1}

In a recent paper \cite{LDV}, the authors investigated the relation between non-stationary subdivision and sequences of function systems (SFSs). This study introduced the notion of backward trajectories on a metric space and a related version of the Banach fixed point theorem for sequences of maps. With this theory, limits of non-stationary subdivision schemes with masks of fixed size are related to attractors of SFSs.

The present paper is motivated by an attempt to relate more general types of subdivision processes to SFSs. In order to explain these goals we start with a short introduction to the field of subdivision schemes.
\subsection{Some Notions of Subdivision Schemes}\label{NOS}\hfill

\medskip
Subdivision schemes play an important role in Computer-Aided Geometric Design (CAGD) and in wavelets theory \cite{CDM,DL}. Here, we only consider univariate scalar binary subdivision schemes.

Given  a set of control points $p^{0}=\{p_j^0\in \mathbb{R}^m : j \in \mathbb{Z}\}$ at level $0$, a stationary binary subdivision scheme recursively defines new sets of points $p^k= \{p_j^k: j \in \mathbb{Z}\}$ at level $k \ge 1$ by the refinement rule
\begin{equation}\label{sa}
p_i^{k+1} = \sum_{j \in \mathbb{Z}} a_{i-2j} p_j^k,\quad k\ge 0,
\end{equation}
or in short form,
\begin{equation}\label{short1}
\bar{p}^{k+1}=S_a \bar{p}^k,\quad k\ge 0,
\end{equation}
The refinement is manifested by relating each point $p_j^k$ at level $k$ of the subdivision process to the binary parametric value $j2^{-k}$.
The set of real coefficients $a= \{a_j: j \in \mathbb{Z}\}$ that determines the refinement rule is called the mask of the scheme.
We assume that the support of the mask, $\sigma(a) :=
\{j \in \mathbb{Z}:a_j \neq 0\}$, is finite. In (\ref{short1}),
$S_a$ is a bi-infinite two-slanted matrix with entries $(S_a)_{i,j}=a_{i-2j}$, and $\bar{p}^k$ is an infinite matrix whose rows are the points at level $k$, $(\bar{p}^k)_{i,\cdot}=p^k_i\in\mathbb{R}^m$.

A non-stationary binary subdivision scheme is defined formally as
\begin{equation}\label{short2}
\bar{p}^{k+1}=S_{a^{[k]}} \bar{p}^k,\quad k\ge 0,
\end{equation}
where the refinement rule at refinement level $k$ is of the form
\begin{equation}\label{sak}
p_i^{k+1} = \sum_{j \in \mathbb{Z}^s} a_{i-2j}^{[k]} p_j^k,\quad i\in \mathbb{Z},
\end{equation}
i.e., in a non-stationary scheme, the mask $a^{[k]}:= \{a_j^{[k]}: j \in \mathbb{Z}\}$  depends on the refinement level $k$.

The classical definition of a convergent subdivision scheme is the following.
\begin{defn}[\textbf{$C^\nu$-Convergent Subdivision}]\label{C0conv}
A subdivision scheme is termed $C^\nu$-convergent, $\nu\ge 0$, if for any initial data  $p^0$ there exists a $C^\nu$ function $f:\mathbb{R} \to \mathbb{R}^m$ such that
\begin{equation}\label{pktof}
\lim_{k\to\infty}\sup_{i\in \mathbb{Z}}|p_i^{k}-f(2^{-k}i)|=0,
\end{equation}
and for some initial data $f\ne 0$.
\end{defn}

\begin{rem}\label{remarkC0}
\hfill

The limit curve of a $C^0$-convergent subdivision applied to initial data  $p^0$ is denoted by $p^\infty=S_a^\infty p^0$. The function $f$ in Definition \ref{C0conv} specifies a parametrization of the limit curve.
\end{rem}
The analysis of subdivision schemes aims at studying the smoothness properties of the limit function $f$. For further reading, see \cite{DL}.

A weaker type of convergence is obtained by using a set distance approach.
\begin{defn}[{\bf $h$-Convergent Subdivision}]\label{hconv}
A subdivision scheme is termed $h$-convergent if for arbitrary initial data $p^0$ there exists a set $p^\infty\subset \mathbb{R}^m$, such that
\begin{equation}\label{setlimit}
\lim_{k\to\infty}h(p^k,p^\infty)=0,
\end{equation}
where $h$ is the Euclidian Hausdorff metric on $\R^m$.
The set $p^\infty$ is termed the $h$-limit of the subdivision scheme.
\end{defn}

The relationship between
curves and surfaces generated by stationary subdivision algorithms and self-similar fractals generated by iterated function systems (IFSs) was first presented in \cite{SLG}. 
The work in \cite{LDV} establishes a relation between non-stationary subdivision with a mask of fixed support size and sequences of function systems (SFSs). The present paper is motivated by our goal to find the relationship between subdivision and fractals for an extended families of subdivision schemes
such as non-stationary schemes with growing mask size and non-uniform schemes. It turns out that two new structures of sequences of maps are needed here. The first involves an infinite binary tree of maps on a metric space and the second a sequence of metric spaces $\{(X_i,d_i)\}_{i\in\N}$ together with a ``staircase'' type sequence of maps $\{T_i\}_{i\in \N}$, $T_i:X_{i+1}\to X_i$. For both structures, we extend the Banach fixed point theorem.

In Section \ref{TM}, we study the structure of an infinite tree of maps (TMs). We consider convergence of backward trajectories along paths in the tree and introduce the notion of attractor of a tree. Staircase maps are analyzed in Section \ref{SM} and in Sections \ref{SMFS} and \ref{SubdSM} we demonstrate the application of staircase trajectories to SFSs related to non-stationary subdivision procedures, in particular that which generates the up-function \cite{DL}. In Section \ref{NUSTA}, staircase maps and trees of maps are applied to the analysis of linear non-uniform subdivision schemes.

The results in the present work are built upon the results in \cite{LDV} which we briefly review below.

\subsection{Sequences of Maps on $\{X,d\}$ and their Trajectories}

Let $(X,d)$ be a complete metric space. For a map $f: X \to X$, we define the Lipschitz constant associated with $f$ by
\[
\text{Lip}(f) = \sup_{x,y \in X, x \neq y} \frac{d\big(f(x),f(y)\big)}{d(x,y)}.
\]
A map $f$ is said to be Lipschitz if $\text{Lip}(f) < + \infty$ and a contraction if $\text{Lip}(f) < 1$.

Now, consider a sequence of continuous maps $\{T_i\}_{i \in \N}$, $T_i: X \to X$.

\begin{defn}[\textbf{Forward and Backward Trajectories}]

The forward and backward trajectories of $\{T_i\}_{i \in \N}$ in $X$, starting from $x\in X$, are
\begin{enumerate}
\item $ \Phi_k(x)=T_k \circ T_{k-1} \circ \dots \circ T_1(x)$, $k \in \mathbb{N}$,
respectively,
\item $ \Psi_k(x)=T_1 \circ T_2 \circ \dots \circ T_k(x)$, $k \in \mathbb{N}$.
\end{enumerate}
\end{defn}

In \cite{LDV}, the convergence of both types of trajectories is studied and the results are applied to the case where the maps are iterated function systems (IFSs) generating fractals. Such fractals are then related to limits of non-stationary subdivision procedures generating curves.
The following definition is presented in \cite{LDV} and is used in this paper.

\begin{defn}
Two sequences $\{x_i\}_{i \in \mathbb{N}}$ and $\{y_i\}_{i \in \mathbb{N}}$ in a complete metric space $(X,d)$ are said to be asymptotically equivalent if $d(x_i,y_i) \to 0$ as $i \to \infty$. We denote this relation by
\begin{equation}
\{x_i\}\sim \{y_i\}.
\end{equation}
Obviously $\sim$ is an equivalence relation.
\end{defn}
With this definition, we can formulate an important property of these trajectories.
\begin{prop}[\bf Asymptotic Equivalence of Trajectories]
\label{Equivalence}

Let $\{T_i\}_{i \in \mathbb{N}}$ be a sequence of transformations on $X$ where each $T_i$ is a Lipschitz map with Lipschitz constant $s_i$. If $\lim\limits_{ k \to \infty} \prod\limits_{i=1}^k s_i =0$ then
for any $x,y\in X$,
\begin{equation}
\begin{aligned}
\{\Phi_k(x)\}\sim \{\Phi_k(y)\},\\
\{\Psi_k(x)\}\sim \{\Psi_k(y)\}.
\end{aligned}
\end{equation}
\end{prop}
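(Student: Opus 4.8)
The key observation is that both $\Phi_k$ and $\Psi_k$ are compositions of $k$ Lipschitz maps, so their Lipschitz constants are bounded by $\prod_{i=1}^k s_i$. For the forward trajectories, I would write directly
\[
d\bigl(\Phi_k(x),\Phi_k(y)\bigr) = d\bigl(T_k\circ\cdots\circ T_1(x),\, T_k\circ\cdots\circ T_1(y)\bigr) \le \Bigl(\prod_{i=1}^k s_i\Bigr)\, d(x,y),
\]
using submultiplicativity of the Lipschitz constant ($\Lip(f\circ g)\le \Lip(f)\,\Lip(g)$) iterated $k$ times. Since $d(x,y)$ is a fixed finite number and $\prod_{i=1}^k s_i\to 0$ by hypothesis, the right-hand side tends to $0$, which is exactly the statement $\{\Phi_k(x)\}\sim\{\Phi_k(y)\}$.

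For the backward trajectories the argument is essentially the same, but one must be slightly careful about which product of Lipschitz constants appears, since $\Psi_k(x)=T_1\circ T_2\circ\cdots\circ T_k(x)$ applies $T_k$ first. The composition bound still gives
\[
d\bigl(\Psi_k(x),\Psi_k(y)\bigr) \le \Bigl(\prod_{i=1}^k s_i\Bigr)\, d(x,y),
\]
because the set of indices $\{1,2,\dots,k\}$ is the same regardless of the order of composition, and the Lipschitz constant of a composition is bounded by the product of the individual constants irrespective of order. Again the right-hand side vanishes as $k\to\infty$, giving $\{\Psi_k(x)\}\sim\{\Psi_k(y)\}$.

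The only mild subtlety — and the place I would be most careful — is ensuring the Lipschitz bound for a $k$-fold composition is legitimate even when some $s_i$ may exceed $1$: the inequality $\Lip(T_k\circ\cdots\circ T_1)\le s_k\cdots s_1$ holds unconditionally for Lipschitz maps, so no issue arises, and the hypothesis $\prod_{i=1}^k s_i\to 0$ is what forces the products to be eventually small. One should also note that the proof uses only that $x,y$ are fixed points of $X$ (so $d(x,y)<\infty$), and that the conclusion is uniform in the sense that the rate of asymptotic equivalence depends on $x,y$ only through the single constant $d(x,y)$. No completeness of $X$ is actually needed for this particular statement — completeness enters only when one wants the trajectories themselves to converge, not merely to be asymptotically equivalent.
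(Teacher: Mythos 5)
Your proof is correct and follows essentially the same route as the paper: the paper (in its proof of the analogous staircase result, Proposition \ref{Equivalence2}) peels off the Lipschitz constants one at a time to obtain the bound $\bigl(\prod_{i=1}^k s_i\bigr)\,d(x,y)$, which is exactly your submultiplicativity argument. Your remarks that the order of composition does not affect the product and that completeness is not needed here are accurate.
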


To present the results in \cite {LDV} concerning convergence of trajectories, we need the following definition.

\begin{defn}[\bf Invariant Domain of $\{T_i\}$]\label{def3}

We call $C\subseteq X$ an invariant domain of a sequence of transformations $\{T_i\}_{i \in \mathbb{N}}$ if
\begin{equation}\label{C}
\forall i\in \N\ ,\  \forall x\in C:\ T_i(x)\in C.
\end{equation}
\end{defn}

For the convergence of forward trajectories $\{\Phi_k(x)\}$, it is assumed in \cite{LDV} that the sequence of maps converges to a limit map: $\lim\limits_{i\to\infty}T_i=T$.

\begin{prop}[\bf Convergence of Forward Trajectories]
\label{forwardconvergence}

Let $\{T_i\}_{i \in \mathbb{N}}$ be a sequence of transformations on $X$ with a common compact invariant domain $C$. Let $T: C\to C$ be a Lipschitz map with Lipschitz constant $\mu<1$.
If
\begin{equation}\label{epssum}
\lim_{i\to\infty}\sup\limits_{x\in C}d(T_i(x),T(x))=0,
\end{equation}
then for any $x\in C$ the forward trajectory $\{\Phi_k(x)\}_{k \in \mathbb{N}}$ converges to the fixed point $p$ of $T$, $p=Tp$, namely,
\begin{equation}
\lim_{k\to\infty}d(\Phi_k(x),p)=0.
\end{equation}
\end{prop}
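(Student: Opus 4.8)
The plan is to reduce the statement to a scalar recursion for the errors $a_k := d(\Phi_k(x),p)$, where $p$ is the fixed point of $T$. First I would record two consequences of the hypotheses: since $C$ is compact it is complete, so the contraction $T$ has a unique fixed point $p\in C$ by the classical Banach fixed point theorem; and each quantity $\varepsilon_i := \sup_{x\in C} d(T_i(x),T(x))$ is finite, being the supremum of a continuous function on a compact set, with $\varepsilon_i\to 0$ by assumption \eqref{epssum}.

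Next, writing $\Phi_k(x)=T_k(\Phi_{k-1}(x))$ (with $\Phi_0(x)=x$) and using the triangle inequality together with $Tp=p$ and $\Lip(T)=\mu$, one gets
\begin{equation*}
a_k \le d\bigl(T_k(\Phi_{k-1}(x)),T(\Phi_{k-1}(x))\bigr) + d\bigl(T(\Phi_{k-1}(x)),T(p)\bigr) \le \varepsilon_k + \mu\,a_{k-1}.
\end{equation*}
Iterating this one-step bound yields
\begin{equation*}
a_k \le \mu^k a_0 + \sum_{j=1}^{k}\mu^{k-j}\varepsilon_j, \qquad a_0 = d(x,p) < \infty .
\end{equation*}

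The remaining task is to show the right-hand side tends to $0$. The term $\mu^k a_0\to 0$ since $\mu<1$. For the convolution sum I would invoke a standard Toeplitz-type argument: given $\delta>0$, choose $N$ so that $\varepsilon_j<\delta$ for all $j>N$; then for $k>N$,
\begin{equation*}
\sum_{j=1}^{k}\mu^{k-j}\varepsilon_j = \sum_{j=1}^{N}\mu^{k-j}\varepsilon_j + \sum_{j=N+1}^{k}\mu^{k-j}\varepsilon_j \le \mu^{k-N}\sum_{j=1}^{N}\varepsilon_j + \frac{\delta}{1-\mu},
\end{equation*}
and letting $k\to\infty$ with $N$ fixed gives $\limsup_{k\to\infty} a_k \le \delta/(1-\mu)$; since $\delta>0$ is arbitrary, $a_k\to 0$, which is the assertion.

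I expect no serious obstacle: the argument is an error-propagation estimate of the same flavour as the usual Banach fixed point proof. The only points needing a little care are verifying that the hypotheses force $\varepsilon_i<\infty$ and guarantee existence of $p$ (both follow from compactness of $C$, though boundedness and completeness would already suffice), and the elementary but slightly fiddly fact that the convolution of a geometric sequence with a null sequence is itself null. Compactness of $C$ is otherwise used only lightly, essentially to keep the setting clean.
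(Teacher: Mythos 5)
Your argument is correct and complete: the one-step estimate $a_k\le\varepsilon_k+\mu a_{k-1}$, its iteration to $a_k\le\mu^k a_0+\sum_{j=1}^k\mu^{k-j}\varepsilon_j$, and the Toeplitz-type splitting showing that the convolution of a geometric sequence with a null sequence is null are all sound, and you correctly use invariance of $C$ to keep $\Phi_{k-1}(x)$ in the domain of $T$. Note that the paper itself offers no proof of this proposition --- it is quoted as a review of a result from the reference \cite{LDV} --- so there is no in-paper argument to compare against; your error-propagation approach is the standard one for such perturbed Banach iterations and can stand on its own.
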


For the convergence of the backward trajectories $\{\Psi_k(x)\}$ the following result is presented in \cite{LDV}.

\begin{prop}[\bf Convergence of Backward Trajectories]
\label{BTp1}

Let $\{T_i\}_{i \in \mathbb{N}}$ be a sequence of transformations on $X$ with a common compact invariant domain $C$. Assume that each $T_i$ is a Lipschitz map with Lipschitz
constant $s_i$. If $\sum\limits_{k=1}^\infty \prod\limits_{i=1}^k s_i <\infty$, then the backward trajectories $\{\Psi_k(x)\}_{k \in \mathbb{N}}$ converge for all points $x\in C$ to a unique point in $C$.
\end{prop}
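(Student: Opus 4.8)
The plan is to prove that for each fixed $x\in C$ the backward trajectory $\{\Psi_k(x)\}_{k\in\N}$ is a Cauchy sequence in the complete space $X$, and then to show that its limit lies in $C$ and is independent of $x$. Throughout I would write $\pi_k:=\prod_{i=1}^k s_i$, so that the hypothesis reads $\sum_{k=1}^\infty \pi_k<\infty$ and, in particular, $\pi_k\to 0$.

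The starting observation is the telescoping identity $\Psi_{k+1}(x)=\Psi_k\big(T_{k+1}(x)\big)$, together with the elementary fact that a finite composition of Lipschitz maps is Lipschitz with constant at most the product of the individual constants, so $\Lip(\Psi_k)=\Lip(T_1\circ\cdots\circ T_k)\le \pi_k$. Since $C$ is a common invariant domain we have $x\in C$ and $T_{k+1}(x)\in C$, and since $C$ is compact its diameter $\diam(C)$ is finite; hence
\begin{equation}
d\big(\Psi_{k+1}(x),\Psi_k(x)\big)=d\big(\Psi_k(T_{k+1}(x)),\Psi_k(x)\big)\le \pi_k\, d\big(T_{k+1}(x),x\big)\le \pi_k\,\diam(C).
\end{equation}
Summing this bound over a block of indices gives, for $m>k$, the estimate $d\big(\Psi_m(x),\Psi_k(x)\big)\le \diam(C)\sum_{j=k}^{m-1}\pi_j$, and convergence of $\sum_j \pi_j$ makes the right-hand side tend to $0$ as $k\to\infty$. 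Thus $\{\Psi_k(x)\}$ is Cauchy; by completeness of $X$ it converges, and because every $\Psi_k(x)$ lies in the compact (hence closed) set $C$, the limit $p(x)$ belongs to $C$.

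It remains to see that $p(x)$ does not depend on $x$. Since $\pi_k\to 0$, Proposition \ref{Equivalence} applies and yields $\{\Psi_k(x)\}\sim\{\Psi_k(y)\}$ for all $x,y\in C$; two convergent sequences that are asymptotically equivalent share the same limit, so $p(x)=p(y)=:p$, a single point of $C$. I expect no genuine analytic obstacle here: the only points deserving care are that the argument uses boundedness of $C$ — supplied by compactness — to control $d(T_{k+1}(x),x)$ uniformly by $\diam(C)$, and that the composition/telescoping bookkeeping giving $\Lip(\Psi_k)\le\pi_k$ and the block estimate is carried out correctly.
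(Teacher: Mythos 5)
Your proof is correct and follows essentially the same route the paper uses: the telescoping bound $d(\Psi_{k+1}(x),\Psi_k(x))\le \big(\prod_{i=1}^k s_i\big)\,\diam(C)$, summation to get a Cauchy sequence, completeness plus closedness of $C$ for existence of the limit, and the asymptotic-equivalence proposition for uniqueness. This is exactly the argument the paper gives for the staircase generalization (Proposition \ref{STp2}), which in turn mirrors the proof of Proposition \ref{BTp1} in \cite{LDV}.
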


Proposition \ref{BTp1} is an example of an extension of the Banach fixed point theorem. Namely, for a given infinite iterative process, it states conditions that guarantee the existence of a basin of attraction from which the iterative process converges to a unique attractor. In this paper we present such extensions for several types of infinite iterative processes.

\section{Trees of Maps (TMs)}\label{TM}

Having in mind non-uniform subdivision schemes and possible applications to the generation of fractals which are non-uniform in space, we introduce the structure of a tree of maps. For expository purposes, we present the idea for the case of binary trees of maps. 

Let us denote by $B^{[k]}$ the set of binary codes of length $k$ from the alphabet $\{1,2\}$,
\begin{equation}\label{Ek2}
B^{[k]} :=\{\eta_k : \eta_k=(i_1 i_2 \ldots i_{k-1} i_k),\ i_j\in\{1,2\}\} = \{1,2\}^k.
\end{equation}
We denote the set of infinite binary codes by $B^{[\infty]}$,
\begin{equation}\label{Binf}
B^{[\infty]}:=\{\eta : \eta=(i_1 i_2 \ldots i_k\dots),\ i_j\in\{1,2\}\} = \{1,2\}^{\mathbb{N}}.
\end{equation}

In the following, we define operations on $B^{[k]}$ and on $B^{[\infty]}$.
\begin{defn}[\bf Prolongation and Truncation Operators]\label{previous}\hfill

For a finite code $\eta_k=(i_1 i_2 \ldots i_{k-1} i_k)\in B^{[k]}$, we define the { prolongation operator} $\pi_j: B^{[k]}\to B^{[k+1]}$ by
\[
\pi_j\,\eta_k := \eta_k j := (i_1 i_2 \ldots i_{k-1} i_k j),\ j=1,2.
\]

The {$\ell$-truncation} operator
$\tau_\ell: B^{[k]}\to B^{[\ell]}$ for $1\le \ell<k$, is defined by
\[
\tau_\ell\,\eta_k:= (i_1 i_2 \ldots i_\ell):= \eta_\ell.
\]
The last definition applies also to $\eta\in B^{[\infty]}$.
\end{defn}

Now, consider an infinite collection of continuous functions (maps) from $X$ to itself, 
\[
\{f_{\eta_k} : \eta_k\in B^{[k]}, k\in\mathbb{Z}_+\}.
\]
A function $f_{\eta_k}$ is also denoted by $f_{i_1 i_2 \ldots i_{k-1} i_k}$. We arrange the maps in a tree structure, as depicted below,
\tikzstyle{level 1}=[level distance=2cm, sibling distance=2cm,->]
\tikzstyle{level 2}=[level distance=2cm, sibling distance=1cm,->]
\tikzstyle{level 3}=[level distance=2cm, sibling distance=1cm,->]
 
\tikzstyle{bag} = [text width=2.5em, text centered]
\tikzstyle{end} = []
 
\begin{tikzpicture}[grow=right, sloped]
\node[bag] {$I$}
     child {
         node[bag] {$f_{2}$}       
             child {
                 node[bag]
                     {$f_{22}\ .\ .\ .$}
                 edge from parent
                 node[above] {}
                 node[below]  {}
             }
             child {
                 node[bag]
                     {$f_{21}\ .\ .\ .$}
                 edge from parent
                 node[above] {}
                 node[below]  {}
             }
             edge from parent 
             node[above] {}
             node[below]  {}
     }
     child {
         node[bag] {$f_{1}$}        
         child {
                 node[bag]
                     {$f_{12}$}
                      child {
                      node[bag]
                     {$f_{122}\ .\ .\ .$} {}
                     child {node {$$} edge from parent[draw=none]}
                     child {node {$\ .\ .\ .\, f_{\eta_k}$}
                      child {
                 node[bag]
                     {$f_{\eta_k2}\ .\ .\ .$}
                 edge from parent
                 node[above] {}
                 node[below]  {}
             }
             child {
                 node[bag]
                     {$f_{\eta_k1}\ .\ .\ .$}
                 edge from parent
                 node[above] {}
                 node[below]  {}
             }
             edge from parent[draw=none]}
                 edge from parent
                 node[above] {}
                 node[below]  {}
             }%
              child {
                 node[bag]
                     {$f_{121}\ .\ .\ .$} {}
                 edge from parent
                 node[above] {}
                 node[below]  {}
             }%
                 edge from parent
                 node[above] {}
                 node[below]  {}
             }
             child {
                 node[bag]
                     {$f_{11}\ .\ .\ .$}
                 edge from parent
                 node[above] {}
                 node[below]  {}
             }
         edge from parent         
             node[above] {}
             node[below]  {}
     };
\end{tikzpicture}
where the functions corresponding to the two branches of the same code $f_{\eta_{k1}}$ and $f_{\eta_{k2}}$ are different.

\subsection{Attractor of a Tree of Maps}

In the following, we define the notion of the attractor of a tree of maps. We present sufficient conditions for the convergence of an infinite process on the tree to a unique attractor $U_{TM}\subset C\subset X$ where $C$ is a common invariant domain of all the maps $\{f_{\eta_k} : \eta_k\in B^{[k]}, k\in\N\}$ in the tree.

\begin{defn}[\bf Paths in a Tree]\label{paths}

Each infinite code $\eta\in B^{[\infty]}$ defines
a path in the tree which is an infinite sequence of codes $P_\eta=\{\eta_1,\eta_2,\ldots,\eta_k,\ldots\}$ such that $\eta_\ell=\tau_\ell\eta$ for $\ell\in \N$.
\end{defn}

A path $P_\eta$ defines a sequence of maps from $X$ to itself, $\{f_{\eta_k}\}_{k\in\N}$. 
Therefore, the proof of the next proposition is a direct consequence of Proposition \ref{BTp1} for the convergence of backward trajectories.

\begin{prop}[\bf Convergence along a Path $P_\eta$]\label{PTFS}

Let $P_\eta$ be a path in a tree $T$ of maps and let $\{f_{\eta_k}\}_{\eta_k\in P_\eta}$ be the sequence of maps along this path with a common compact invariant domain $C_{P_\eta}$ and with associated Lipschitz constants $\{s_{\eta_k}\}_{\eta_k\in P_\eta}$.
If $\sum\limits_{k=1}^\infty \prod\limits_{i=1}^k s_{\eta_i}<\infty$ then
\begin{equation}\label{limitpath}
\gamma(\eta):=\lim_{k\to\infty} f_{\tau_1\eta}\circ \cdot\cdot\cdot\circ f_{\tau_{k-1}\eta}\circ  f_{\tau_k\eta}(x)
\end{equation}
exists for all $x\in C_{P_\eta}$, and is the same element in $C_{P_\eta}$.
\end{prop}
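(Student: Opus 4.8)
The plan is to recognize that fixing a path $P_\eta$ collapses the tree to an ordinary sequence of maps on $X$, after which Proposition~\ref{BTp1} applies verbatim. Concretely, I would set $T_k := f_{\tau_k\eta} = f_{\eta_k}$ for each $k\in\N$. By hypothesis each $f_{\eta_k}$ is one of the continuous maps attached to the tree, so $T_k\colon X\to X$ is continuous; the set $C_{P_\eta}$ is, by assumption, a common compact invariant domain of the family $\{T_k\}_{k\in\N}$; and each $T_k$ is Lipschitz with constant $s_k := s_{\eta_k}$.

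Next I would unwind the definition of the backward trajectory of $\{T_k\}_{k\in\N}$ starting from a point $x$. According to the definition of forward and backward trajectories, this is
\[
\Psi_k(x) = T_1\circ T_2\circ\cdots\circ T_k(x) = f_{\tau_1\eta}\circ f_{\tau_2\eta}\circ\cdots\circ f_{\tau_k\eta}(x),
\]
which is exactly the finite composition whose limit defines $\gamma(\eta)$ in \eqref{limitpath}. Moreover, the summability hypothesis $\sum_{k=1}^\infty\prod_{i=1}^k s_{\eta_i}<\infty$ is precisely the hypothesis $\sum_{k=1}^\infty\prod_{i=1}^k s_i<\infty$ of Proposition~\ref{BTp1} for the sequence $\{T_k\}$.

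Applying Proposition~\ref{BTp1} then gives, for every $x\in C_{P_\eta}$, convergence of $\{\Psi_k(x)\}_{k\in\N}$ to one and the same point of $C_{P_\eta}$; naming this common limit $\gamma(\eta)$ yields the statement. No new estimate is needed beyond those already established for backward trajectories.

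The only point requiring care — the ``main obstacle,'' such as it is — is the index bookkeeping: one must check that the path description $P_\eta=\{\eta_1,\eta_2,\ldots\}$ with $\eta_\ell=\tau_\ell\eta$ matches the re-indexing $T_k = f_{\tau_k\eta}$, and that the composition order in the tree's backward process ($f_{\tau_1\eta}$ outermost, $f_{\tau_k\eta}$ innermost) coincides with the order in the definition of $\Psi_k$. Once this identification is confirmed, the proposition is an immediate corollary of Proposition~\ref{BTp1}.
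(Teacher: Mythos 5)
Your proposal is correct and matches the paper exactly: the paper states that a path $P_\eta$ determines an ordinary sequence of maps $X\to X$ and that the proposition is a direct consequence of Proposition~\ref{PTFS}'s counterpart for backward trajectories, Proposition~\ref{BTp1}. Your additional care with the index bookkeeping and composition order is a sound (if routine) elaboration of the same reduction.
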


\begin{prop}[\bf Convergence to $\boldsymbol{U}_{TM}$]\label{CTFS}

If the conditions of Proposition \ref{PTFS} are satisfied for all paths in T with the same invariant domain $C$, then the following set $U_{TM}\subset C$ is uniquely defined for all $x\in C$:
\begin{align}\label{UTM}
U_{TM} &=\bigcup\limits_{\eta\in B^{[\infty]}}\gamma(\eta)
& = \bigcup\limits_{\eta\in B^{[\infty]}} \lim_{k\to\infty} f_{\tau_1\eta}\circ \cdot\cdot\cdot\circ f_{\tau_{k-1}\eta}\circ  f_{\tau_k\eta}(x).
\end{align}
\end{prop}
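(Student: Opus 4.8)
The plan is to recognise that this proposition is essentially a pointwise application of Proposition \ref{PTFS} carried out simultaneously over every path in the tree, so that the only assertion genuinely requiring argument is that the set $U_{TM}$ produced by formula \eqref{UTM} does not depend on the choice of $x\in C$.

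First I would fix an arbitrary infinite code $\eta\in B^{[\infty]}$ and consider the path $P_\eta=\{\eta_1,\eta_2,\ldots\}$ it determines together with the associated sequence of maps $\{f_{\eta_k}\}_{k\in\N}$ and Lipschitz constants $\{s_{\eta_k}\}_{k\in\N}$. By hypothesis the conditions of Proposition \ref{PTFS} hold for every path with the \emph{same} compact invariant domain, so in particular $C$ is a common compact invariant domain for $\{f_{\eta_k}\}$ and $\sum_{k=1}^\infty\prod_{i=1}^k s_{\eta_i}<\infty$; that is, Proposition \ref{PTFS} applies to $P_\eta$ with $C_{P_\eta}=C$. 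It then yields that
\[
\gamma(\eta)=\lim_{k\to\infty} f_{\tau_1\eta}\circ\cdots\circ f_{\tau_{k-1}\eta}\circ f_{\tau_k\eta}(x)
\]
exists for every $x\in C$ and is one and the same element of $C$, independently of $x$. Hence $\gamma(\eta)$ is a well-defined point of $C$ depending only on $\eta$.

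Next I would form the union over $\eta\in B^{[\infty]}$. Since each $\gamma(\eta)$ was just shown to be independent of the starting point, the set $U_{TM}=\bigcup_{\eta\in B^{[\infty]}}\gamma(\eta)$ is itself independent of $x\in C$: evaluating the right-hand side of \eqref{UTM} with any fixed $x\in C$ — indeed even with a different admissible starting point chosen for each path — produces the identical subset of $C$. This is exactly the claimed unique definedness of $U_{TM}\subset C$, and it completes the proof.

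I do not expect a real obstacle here, since the analytic content has already been supplied by Propositions \ref{BTp1} and \ref{PTFS}. The one point worth stating with care is the role of the hypothesis that the conditions of Proposition \ref{PTFS} hold for \emph{all} paths with a \emph{common} invariant domain $C$: this is precisely what permits the uniform application of that proposition, so that the individual limits $\gamma(\eta)$ are all simultaneously well defined and their union is meaningful as a subset of the single set $C$.
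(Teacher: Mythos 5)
Your argument is correct and coincides with the paper's (implicit) justification: the paper states Proposition \ref{CTFS} without a separate proof, treating it exactly as you do — as a path-by-path application of Proposition \ref{PTFS}, with the independence of each $\gamma(\eta)$ from $x$ carrying over to the union. Your added remark on why the common invariant domain $C$ is needed is a reasonable clarification but not a departure from the paper's route.
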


\begin{defn}[\bf Convergent TMs]\label{convergentTM}

Let the TM be such that for every path in it, the limit (\ref{limitpath}) exists and is the same for all $x\in C\subset X$. Then we call the TM \emph{convergent} and term the set 
$U_{TM}=\bigcup\limits_{\eta\in B^{[\infty]}}\gamma(\eta)$ the \emph{attractor of TM}.
\end{defn}

\subsection{Changing the Order of $\ \bigcup$ and $\lim$ in (\ref{UTM}).}

In order to ensure a practical computation of $U_{TM}$, we should consider changing the order of the operations $\ \bigcup$ and $\lim$ in (\ref{UTM}). Changing the order requires a stronger assumption on the convergence along the paths of TM, namely, a uniform convergence, uniform on all paths in the tree. The objects we deal with are nonvoid compact subsets of $X$. Hence, the convergence we ask for is in ${\bH}(X)$, the collection of all nonvoid compact subsets of $X$. Note that ${\bH}(X)$ becomes a complete metric space when endowed with the Hausdorff metric $h$ \cite{B2}.

\begin{prop}[\bf Uniform onvergence to $\boldsymbol{U}_{TM}$]\label{Uconv}

Let us ssume that all the maps in $T$ share the same common invariant domain $C$.
We further assume that for all paths in $T$
\begin{equation}\label{uniformbound}
\prod\limits_{i=1}^k s_{\tau_i\eta}\le \delta_k,\ \ \ \forall \eta\in B^{[\infty]},
\end{equation}
where $\sum\limits_{k=1}^{\infty}\delta_k<\infty$. Then,
\begin{equation}\label{UTMek}
U_{TM}=\lim_{k\to\infty}\bigcup\limits_{\eta\in B^{[\infty]}} f_{\tau_1\eta}\circ \cdot\cdot\cdot\circ f_{\tau_{k-1}\eta}\circ f_{\tau_{k}\eta}(x),
\end{equation}
where the convergence is with respect to the Hausdorff metric $h$.
\end{prop}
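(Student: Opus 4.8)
The plan is to show that the sets
\[
E_k := \bigcup_{\eta\in B^{[\infty]}} f_{\tau_1\eta}\circ\cdots\circ f_{\tau_k\eta}(x)
\]
form a Cauchy sequence in the complete metric space $(\bH(X),h)$, and that their limit equals $U_{TM}$. First I would verify that each $E_k$ is indeed a nonvoid compact subset of $C$: it is contained in $C$ by the invariance hypothesis, it is nonempty, and — this is the first point needing care — it is compact. Since $E_k$ is the image of the compact code space $B^{[\infty]}=\{1,2\}^{\N}$ (compact by Tychonoff) under the map $\eta\mapsto f_{\tau_1\eta}\circ\cdots\circ f_{\tau_k\eta}(x)$, which depends only on the first $k$ coordinates of $\eta$ and hence is continuous (in fact locally constant), $E_k$ is compact as a continuous image of a compact set; in fact it is a finite union of $2^k$ points. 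Similarly $E_\infty := \overline{U_{TM}} = \overline{\bigcup_\eta \gamma(\eta)}$ should be shown compact, which will follow once the Cauchy estimate below identifies it as the Hausdorff limit of the $E_k$.

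Next I would establish the uniform Cauchy estimate. Fix $\eta\in B^{[\infty]}$ and compare the partial composition $g_{k}^\eta := f_{\tau_1\eta}\circ\cdots\circ f_{\tau_k\eta}(x)$ with $\gamma(\eta)$. Writing $\gamma(\eta) = \lim_{m\to\infty} f_{\tau_1\eta}\circ\cdots\circ f_{\tau_m\eta}(x)$ and telescoping over $m\ge k$, the Lipschitz bounds give
\[
d\bigl(g_k^\eta, f_{\tau_1\eta}\circ\cdots\circ f_{\tau_{m}\eta}(x)\bigr)
\le \sum_{j=k}^{m-1}\Bigl(\prod_{i=1}^{j} s_{\tau_i\eta}\Bigr)\,\diam(C)
\le \diam(C)\sum_{j=k}^{\infty}\delta_j =: \varepsilon_k,
\]
using hypothesis \eqref{uniformbound} and the fact that $C$ is compact, hence bounded; letting $m\to\infty$ yields $d(g_k^\eta,\gamma(\eta))\le\varepsilon_k$ with $\varepsilon_k\to 0$ as $k\to\infty$ because $\sum_k\delta_k<\infty$. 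Crucially $\varepsilon_k$ is independent of $\eta$. Taking suprema over $\eta$, this shows $h(E_k, U_{TM})\le\varepsilon_k$ in the sense that every point of $E_k$ is within $\varepsilon_k$ of $U_{TM}$ and every point $\gamma(\eta)$ of $U_{TM}$ is within $\varepsilon_k$ of the point $g_k^\eta\in E_k$; hence $\{E_k\}$ is Cauchy in $(\bH(X),h)$ and converges to $\overline{U_{TM}}$.

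Finally I would argue that $U_{TM}$ is already closed, so that $\overline{U_{TM}}=U_{TM}$ and \eqref{UTMek} follows. This is the step I expect to be the main obstacle, since a priori an arbitrary union of points need not be closed. The resolution is that the map $\Gamma:B^{[\infty]}\to X$, $\Gamma(\eta):=\gamma(\eta)$, is continuous — indeed, if $\eta,\eta'$ agree in their first $k$ coordinates then $g_k^\eta=g_k^{\eta'}$, so $d(\gamma(\eta),\gamma(\eta'))\le d(\gamma(\eta),g_k^\eta)+d(g_k^{\eta'},\gamma(\eta'))\le 2\varepsilon_k$, giving uniform continuity of $\Gamma$ with respect to the standard metric on $B^{[\infty]}$. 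Therefore $U_{TM}=\Gamma(B^{[\infty]})$ is the continuous image of the compact space $B^{[\infty]}$, hence compact and in particular closed. Combining this with the Cauchy estimate of the previous paragraph gives $E_k\to U_{TM}$ in $(\bH(X),h)$, which is exactly \eqref{UTMek}. One should also remark that the limit does not depend on the choice of $x\in C$, which is inherited directly from Proposition \ref{PTFS} (each $\gamma(\eta)$ is independent of $x$).
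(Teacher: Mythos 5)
Your proof is correct and follows essentially the same route as the paper's: the key step in both is the uniform, path-independent estimate $d\bigl(g_k^\eta,\gamma(\eta)\bigr)\le \diam(C)\sum_{j\ge k}\delta_j$ obtained by telescoping the Lipschitz bounds, which immediately controls the Hausdorff distance between the $k$-th level union and $U_{TM}$. The only difference is that you additionally verify that the $E_k$ and $U_{TM}$ are genuinely elements of $\bH(X)$ (compactness via continuity of $\gamma$ on code space), a point the paper's proof passes over silently.
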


\begin{proof}
The assumption (\ref{uniformbound}) implies that
\begin{equation}\label{uniformbound2}
\sum_{k=m}^\infty\prod\limits_{i=1}^k s_{\tau_i\eta}\le e_m,\ \ \ \forall \eta\in B^{[\infty]},
\end{equation}
where $\lim\limits_{m\to\infty}e_m=0$.
For $x\in C$ and $k\in\mathbb{N}$, let
\begin{equation}\label{partialkpath}
\gamma_k(\eta)(x):= f_{\tau_1\eta}\circ \cdot\cdot\cdot\circ f_{\tau_{k-1}\eta}\circ  f_{\tau_k\eta}(x).
\end{equation}
Following the proof of convergence of backward trajectories in \cite{LDV}, we note that for every  $x\in X$ and $\eta\in B^{\infty}$, $\{\gamma_k(\eta)(x)\}$ is a Cauchy sequence in $(X,d)$ and
\begin{equation}\label{leem}
d(\gamma_m(\eta)(x),\gamma_{m+\ell}(\eta)(x))\le Ee_m,\ \ \forall \ell\in\mathbb{N},
\end{equation}
where $E=\diam(C)$.

Since $\lim\limits_{k\to\infty}\gamma_k(\eta)(x)=\gamma(\eta)$, it follows that 
\begin{equation}\label{leem2}
d(\gamma_m(\eta)(x),\gamma (\eta))\le Ee_m.
\end{equation}
Hence,
$$h\left(\bigcup\limits_{\eta\in B^{[\infty]}}\gamma_m(\eta)(x),\bigcup\limits_{\eta\in B^{[\infty]}}\gamma(\eta)\right)\le Ee_m,$$
which implies the convergence of (\ref{UTMek}) with respect to $h$.
\end{proof}

\subsubsection{Self-Referential Property}\hfill

A convergent TM with maps $\{f_{\eta_k} : \eta_k\in B^{[k]}, k\in\N\}$, defines two subtrees, TM1 with the maps $\{f_{1\eta_k} : \eta_k\in B^{[k]}, k\in\N\}$ and TM2 with the maps $\{f_{2\eta_k} : \eta_k\in B^{[k]}, k\in\N\}$. Assume that TM1 has attractor $U_{TM1}$ and TM2 has attractor $U_{TM2}$. Then these two attractors are related to the attractor $U_{TM}$ of TM by 
\begin{equation}\label{srTM}
U_{TM}=f_1(U_{TM1})\cup f_2(U_{TM2}).
\end{equation}
\begin{rem}[General TMs]\label{GenTrees}

The above results are presented for binary TMs. The case of general TMs involves more complicated indexing but can be treated in exactly the same manner. For general TMs, the union operation in the definition of the limit set $U_{TM}$ in (\ref{UTM}) is over all the paths in the tree.
\end{rem}

\subsubsection{Code Dependence and Location Dependence}\label{explainTFS}\hfill

Within a TM, we have a collection of maps which are code dependent. We explain below that code dependence can imply location dependence. That is, different paths lead to different locations in the attractor $U_{TM}$.

Let $P_\eta$ and $P_\nu$ be two paths in a convergent TM with $\eta=(i_1i_2i_3\dots)$ and $\nu=(j_1j_2j_3\dots)$. Let $\gamma(\eta)$, respectively, $\gamma(\nu)$ be the two points in $X$ generated by the limits (\ref{limitpath}). Endowed with the Fr\'echet metric $d_F:B^{[\infty]}\times B^{[\infty]} \to\R$, 
\[
d_F (P_\eta, P_\nu):= \sum_{n=1}^\infty \frac{|i_n - j_n|}{3^n}, 
\]
$(B^{[\infty]}, d_F)$ becomes a compact metric space. It is known (cf. for instance, \cite{B1,PRM}) that there exists a continuous surjection $\gamma$ from code space $B^{[\infty]}$ to the attractor of an IFS. In the TM setting, the mapping $\gamma (\eta)$ is also a continuous map from $B^{[\infty]}$ to the attractor $U_{TM}$, as shown below. 

\begin{prop}\label{contmap}

Under the conditions of Proposition \ref{CTFS}, the mapping $\gamma (\eta)$ is a continuous surjection from $B^{[\infty]}$ onto $U_{TM}$. 
\end{prop}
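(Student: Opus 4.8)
The plan is to prove two things: surjectivity of $\gamma$ onto $U_{TM}$, which is essentially immediate from the definition, and continuity of $\gamma$ as a map from $(B^{[\infty]}, d_F)$ to $(U_{TM}, d)$, which is the substantive part. For surjectivity, note that by definition $U_{TM} = \bigcup_{\eta \in B^{[\infty]}} \gamma(\eta)$, so every point of $U_{TM}$ is $\gamma(\eta)$ for some $\eta$, and conversely each $\gamma(\eta)$ lies in $U_{TM}$; hence $\gamma$ maps onto $U_{TM}$.

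For continuity, I would fix $x \in C$ and use the partial compositions $\gamma_k(\eta)(x) := f_{\tau_1\eta} \circ \cdots \circ f_{\tau_k\eta}(x)$ as in the proof of Proposition~\ref{Uconv}. The key observation is that $\gamma_k(\eta)(x)$ depends only on the first $k$ symbols of $\eta$, i.e. on $\eta_k = \tau_k\eta$. So if two codes $\eta$ and $\nu$ agree in their first $k$ entries, then $\gamma_k(\eta)(x) = \gamma_k(\nu)(x)$. I would then split
\[
d(\gamma(\eta), \gamma(\nu)) \le d(\gamma(\eta), \gamma_k(\eta)(x)) + d(\gamma_k(\eta)(x), \gamma_k(\nu)(x)) + d(\gamma_k(\nu)(x), \gamma(\nu)).
\]
The middle term vanishes once $\eta$ and $\nu$ share a prefix of length $k$. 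For the outer two terms, I would invoke the tail estimate from the proof of Proposition~\ref{Uconv}: since $\sum \prod_{i=1}^k s_{\tau_i\eta} < \infty$ uniformly (or at least for each path), one has $d(\gamma_k(\eta)(x), \gamma(\eta)) \le E e_k$ with $E = \diam(C)$ and $e_k \to 0$ independently of $\eta$ when the uniform hypothesis \eqref{uniformbound} holds. Strictly, Proposition~\ref{CTFS} only assumes convergence along each path with a common $C$, so I would either additionally assume the uniform bound \eqref{uniformbound} (as in Proposition~\ref{Uconv}), or argue that given $\varepsilon > 0$ one can first choose $k$ so that the outer terms are small—this is where a uniformity assumption is genuinely needed to get a single $k$ that works near a given point.

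Putting it together: given $\varepsilon > 0$, choose $k$ with $E e_k < \varepsilon/2$ (using the uniform tail bound). Then whenever $d_F(P_\eta, P_\nu)$ is small enough that $\eta$ and $\nu$ agree on the first $k$ symbols—which happens as soon as $d_F(P_\eta, P_\nu) < 3^{-k}$, since a discrepancy in position $n \le k$ forces $d_F \ge 3^{-n} \ge 3^{-k}$—we get $d(\gamma(\eta), \gamma(\nu)) < \varepsilon$. This shows $\gamma$ is (uniformly) continuous. Finally, since $(B^{[\infty]}, d_F)$ is compact and $\gamma$ is continuous, $U_{TM} = \gamma(B^{[\infty]})$ is compact, which is consistent with working in $\bH(X)$.

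The main obstacle is the gap between the hypotheses of Proposition~\ref{CTFS} (pathwise convergence, common $C$) and what continuity really requires (a modulus of convergence uniform over paths, so that the same truncation level $k$ controls the tail for all codes). I would address this by explicitly importing the uniform summability condition \eqref{uniformbound} from Proposition~\ref{Uconv} into the argument—or noting that without it, continuity can fail—so the cleanest statement is to prove continuity under the hypotheses of Proposition~\ref{Uconv}. Everything else is routine: the prefix-dependence of $\gamma_k$ and the elementary lower bound $d_F(P_\eta,P_\nu) \ge 3^{-n}$ when the codes first differ at position $n$.
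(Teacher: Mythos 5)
Your overall strategy is sound and your surjectivity remark matches the paper's (implicit) one, but your continuity argument has a genuine gap relative to the statement as posed: you only prove continuity under the \emph{additional} uniform hypothesis \eqref{uniformbound} of Proposition~\ref{Uconv}, whereas the proposition claims continuity under the hypotheses of Proposition~\ref{CTFS} alone (pathwise summability with a common invariant domain $C$). Moreover, your justification for importing \eqref{uniformbound} --- that ``a uniformity assumption is genuinely needed to get a single $k$ that works near a given point'' --- is mistaken. The paper's proof shows why: if $\eta$ and $\nu$ agree in their first $\ell$ symbols, then one can peel off the shared prefix directly,
\[
d\bigl(\gamma_k(\eta)(x),\gamma_k(\nu)(x)\bigr)\ \le\ \prod_{m=1}^{\ell}s_{\eta_m}\cdot d\bigl(f_{\eta_{\ell+1}}\circ\cdots\circ f_{\eta_k}(x),\,f_{\nu_{\ell+1}}\circ\cdots\circ f_{\nu_k}(x)\bigr)\ \le\ D\prod_{m=1}^{\ell}s_{\eta_m},
\]
with $D=\diam(C)$, uniformly in $k>\ell$; letting $k\to\infty$ gives $d(\gamma(\eta),\gamma(\nu))\le D\prod_{m=1}^{\ell}s_{\eta_m}$, and this product tends to $0$ as $\ell\to\infty$ because the pathwise condition $\sum_k\prod_{i=1}^k s_{\eta_i}<\infty$ of Proposition~\ref{PTFS} already forces $\prod_{i=1}^{\ell}s_{\eta_i}\to 0$ along the fixed path $P_\eta$. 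No uniformity over codes is used: the bound depends only on the prefix of $\eta$, the point at which continuity is being tested.

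The same repair works inside your own three-term decomposition. Your outer term for $\nu$ should be bounded not by the uniform tail $Ee_k$ but by the prefix-dependent estimate $d(\gamma_k(\nu)(x),\gamma(\nu))\le D\prod_{i=1}^{k}s_{\nu_i}$ (obtained by comparing $\gamma_k(\nu)(x)$ with $\gamma_m(\nu)(x)$ for $m>k$ and noting both inner arguments lie in $C$). Since $\nu$ shares its first $k$ symbols with $\eta$, one has $\prod_{i=1}^{k}s_{\nu_i}=\prod_{i=1}^{k}s_{\eta_i}$, so this term is controlled by exactly the same quantity as the outer term for $\eta$, and the pathwise hypothesis suffices. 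Your prefix argument for the middle term and your use of $d_F(P_\eta,P_\nu)<3^{-k}$ to force agreement of the first $k$ symbols are both correct (and match the paper's $\ell(\delta)=[-\log_3\delta]$). With the sharper tail bound substituted, your proof establishes the proposition as stated rather than a weakened version of it.
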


\begin{proof}
To prove continuity at $P_\eta$, let us assume that $d_F(P_\eta, P_\nu)<\delta$. It follows that $i_m=j_m$ for $1\le m\le \ell(\delta)$ where $\ell(\delta)=[-\log_3(\delta)]$.
Using the expression in (\ref{limitpath}), we examine the distance
\begin{equation}
\epsilon_k :=  d( f_{\eta_1}\circ \cdot\cdot\cdot\circ f_{\eta_{k-1}}\circ  f_{\eta_k}(x), f_{\nu_1}\circ \cdot\cdot\cdot\circ f_{\nu_{k-1}}\circ  f_{\nu_k}(x)),\ \ x\in C,\ \  k>\ell(\delta).
\end{equation}
Since $\eta_m=\tau_m$ for $1\le m\le \ell(\delta)$, it follows by recursive application of 
$$d(f_{\eta_m}(a),f_{\eta_m}(b))\le s_{\eta_m}d(a,b),$$
that
$$
\epsilon_k\le \prod_{m=1}^{\ell(\delta)}s_{\eta_m}\cdot d( f_{\eta_{\ell(\delta)+1}}\circ \cdot\cdot\cdot\circ  f_{\eta_k}(x), f_{\nu_{\ell(\delta)+1}}\circ \cdot\cdot\cdot\circ  f_{\nu_k}(x)).
$$
Hence,
$$
\epsilon_k\le D\prod_{m=1}^{\ell(\delta)}s_{\eta_m},\ \ \forall k>\ell(\delta),
$$
where $D$ is the diameter of $C$. Consequently,
$$
d(\gamma(\eta),\gamma(\nu))\le D\prod_{m=1}^{\ell(\delta)}s_{\eta_m}.
$$
The proof follows by observing that as $\delta\to 0$, $\ell(\delta)\to \infty$ and $\prod\limits_{m=1}^{\ell(\delta)}s_{\eta_m}\to 0$.
\end{proof}

The continuity of $\gamma$ implies that the code dependence of the functions in the tree means location dependence. For more details about the relation between codes and points in the attractor of an iterated function system, see e.g. \cite{B1,PRM}.

In the next subsection we apply the above results to trees of function systems consisting of pairs of maps each.

\subsection{Trees of Function Systems (TFSs)}

Recall that an iterated function system (IFS) is a pair consisting of a complete metric space $(X,d)$ and a finite family of continuous maps $f_i: X \to X$, $i \in \{1,2,\dots,n\}$. We denote such an IFS by $\mathcal{F}=\{X; f_i: i=1,2,\dots, n\}$.  If the $f_i$ are contraction maps, the IFS is called contractive. The contraction constant of $\mathcal{F}$ is $L_{\mathcal{F}}=\max\limits_{i=1,2,\dots, n} \text{Lip}(f_i)$.

Consider the set-valued mapping $\mathcal{F}: \mathbb{H}(X)\to \mathbb{H}(X)$,
\begin{equation}\label{FX}
 \mathcal{F}(B) := \bigcup\limits_{f \in \mathcal{F}} f(B),\ \ B\in \mathbb{H}(X),
\end{equation}
where $f(B):= \big\{f(b): b \in B \big\}$.
It is well known that for a contractive IFS,  $\mathcal{F}$ is a contraction in $(\mathbb{H}(X),h)$ with contraction constant $L_{\mathcal{F}}=\max\limits_{i=1,2,\dots, n} \text{Lip}(f_i)$ \cite{B2}. Therefore, by the Banach contraction principle $\mathcal{F}$  has a unique fixed point, denoted here by $U_{IFS}$, called the attractor of the IFS.

Here, we consider a binary IFS $\mathcal{F}=\{f_1,f_2\}$. If both functions are contractions on $X$, the attractor of the IFS is given by
\begin{equation}\label{UIFS}
U_{IFS}=\lim_{k\to\infty}\bigcup\limits_{(i_1 i_2\ldots i_k)\in B^{[k]}} f_{i_1}\circ f_{i_2}\circ \cdot\cdot\cdot\circ f_{i_k}(A),
\end{equation}
where $A$ is any non-void compact subset of $X$. By Proposition \ref{Uconv},
the convergence in (\ref{UIFS}) is ensured.

A binary tree of maps may be considered as a tree of function systems. Each pair of functions ($f_{\eta_k1}$, $f_{\eta_k2}$) defines a function system, which we denote by $\mathcal{F}_{\eta_k}$.
Consequently, the above tree of maps induces an infinite binary tree of function systems (TFS) $\{\mathcal{F}_{\eta_k}\}_{\eta_k\in B^{[k]},\,k\in \N}$ as follows.
For $k=0$, set
$$\mathcal{F}_0=\{f_{1},f_{2}\},$$
and for $\eta_k\in B^{[k]}$, $k\in \N$,
$$ \mathcal{F}_{\eta_k} = \{f_{\eta_k 1},f_{\eta_k 2}\}.$$

Following backward paths on the tree of function systems, the natural generalization of the fractal $U_{IFS}$ in (\ref{UIFS}) is
\begin{equation}\label{UTFS2}
U_{TFS}=\lim_{k\to\infty}\bigcup\limits_{\eta_k\in B^{[k]}} f_{\tau_1\eta_k}\circ f_{\tau_2\eta_k} \cdot\cdot\cdot\circ f_{\eta_k}(A).
\end{equation}
To ensure convergence in (\ref{UTFS2}), with respect to $h$, we assume here that the condition (\ref{uniformbound2}) in Proposition \ref{Uconv} is satisfied, with $s_{\eta_k}=\Lip(\mathcal{F}_{\eta_k})$.

If all the function systems are the same IFS, we have $U_{TFS}=U_{IFS}$. If the function systems 
$\{\mathcal{F}_{\eta_k}\}$ are all the same for a fixed $k$, we retrieve the case of sequences of function systems discussed in \cite{LDV}. In the following example we present a tree of function systems and its attractor.

\begin{ex}\label{TFSexample}

To each code $\eta_k=(i_1i_2...i_k)$ we attach a number 
\[
t(\eta_k) := \sum_{j=1}^k (i_j-1)2^{-j}\in [0,1].
\]
We define the function systems in the TFS by the following maps on $X=\mathbb{R}^2$:
\[
f_{\eta_k1}\left(\begin{pmatrix} x_1 \\ x_2\end{pmatrix}\right)=\begin{pmatrix} 0.4 & 0.4\\ -0.5 & 0.3 \end{pmatrix} \begin{pmatrix} x_1 \\ x_2\end{pmatrix} + \begin{pmatrix} 4 \\ 4\end{pmatrix},
\]
and

\[
f_{\eta_k2}\left(\begin{pmatrix} x_1 \\ x_2\end{pmatrix}\right)=0.8\begin{pmatrix} \cos \theta_k & \sin\theta_k\\ -\sin\theta_k & \cos\theta_k \end{pmatrix} \begin{pmatrix} x_1 \\ x_2\end{pmatrix} + \begin{pmatrix} -2 \\ 0\end{pmatrix},
\]
where $\theta_k :=3t(\eta_k)+1$, $k\in \N$.

All of the above maps are contractive and thus the tree of maps is convergent by Proposition \ref{CTFS}. In Figure \ref{Tfractal2d4} below, we depict its attractor. The noticeable property of this attractor is that is has different structures at different locations. The challenge is to investigate how to design desirable structures at specific locations.

\begin{figure}[!ht]
    \includegraphics[width=4in]{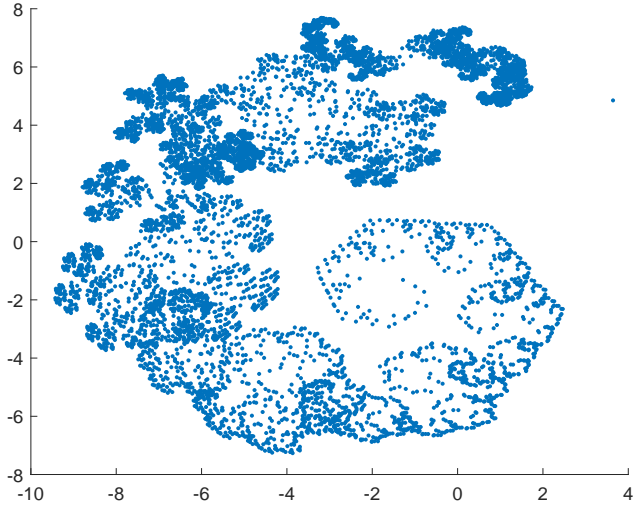}
    \caption{An attractor $U_{TFS}$ showing different structures at different locations.}
    \label{Tfractal2d4}
\end{figure}
\end{ex}

\section{Staircase Maps on a Sequence of Metric Spaces and their Trajectories}\label{SM}

Let $(X,d_X)$ and $(Y,d_Y)$ be two complete metric spaces and let
$T: X \to Y$ be a continuous map. Consider a non-void compact set $C\subset X$.

\begin{defn}\label{LipC}

The Lipschitz constant associated with $f$ on $C$ is defined by
$$\text{Lip}_C(T) := \sup_{a,b \in C, a \neq b} \frac{d_Y\big(T(a),T(b)\big)}{d_X(a,b)}.$$
A map $T$ is said to be Lipschitz map with respect to $C$ if $\text{Lip}_C(T) < + \infty$ and a contraction with respect to $C$ if $\text{Lip}_C(T) < 1$.
\end{defn}

Let us consider an infinite sequence of complete metric spaces $\{(X_i,d_i)\}_{i\in\N_0}$ and an associated sequence of continuous maps $\{T_i\}_{i\in\N}$,

\begin{equation}\label{Ti}
T_i:X_{i}\to X_{i-1}, \ \ \ i\in\N.
\end{equation}

Correspondingly, we assume the existence of non-void compact subsets $\{C_i\}_{i\in\N_0}$, $C_i\subset X_i$, of bounded diameters, $\diam (C_i)\le D$, such that

\begin{equation}\label{TionCi}
T_i:C_{i}\to C_{i-1}, \ \ \ i\in\N.
\end{equation}

We further assume that each $T_i$ is Lipschitz with respect to $C_{i}$, ${i\in\N}$. We denote its Lipschitz constant by $s_i$:

\begin{equation}\label{LipTi}
s_i :=\text{Lip}_{C_{i}}(T_i), \ \ \ i\in\N.
\end{equation}

In order to consider backward trajectories in the present general setting of maps between different spaces, we need to modify their definition.
We relate a trajectory of $\{T_i\}_{i\in\N}$ to a base sequence of points
\begin{equation}\label{barx}
\bar{x}=\{x_{i}\}_{i\in\N},\ \ \ \ x_i\in C_i.
\end{equation}
We refer to such a sequence as a \emph{base sequence for a backward staircase trajectory}.

\begin{defn}[\bf Backward Staircase Trajectories of $\{T_i\}$ with respect to $\bar{x}$]\label{defbt}

Under (\ref{Ti}) and (\ref{TionCi}), a trajectory of the sequence of maps $\{T_i\}_{i\in\N}$ with respect to a sequence of base points $\bar{x}$ as in (\ref{barx}) is the sequence of points $\{p_k(\bar{x})\}_{k\in\N}\subset X_0$ defined by
\begin{equation}\label{pk}
p_k(\bar{x})=T_1 \circ T_2 \circ \dots \circ T_k(x_{k}),\ \ k \in \mathbb{N}.
\end{equation}
\end{defn}


\begin{prop}[\bf Asymptotic Equivalence of Backward Staircase Trajectories]\label{Equivalence2}

Let $\{(X_i,d_i)\}_{i\in\N_0}$, $\{T_i\}_{i\in\N}$ , $\{C_i\}_{i\in\N_0}$, and $\{s_i\}_{i\in\N}$ be as above.
If $\ \lim\limits_{ k \to \infty} \prod\limits_{i=1}^k s_i =0$, then for any two base sequences $\bar{x}$ and $\bar{y}$,
\begin{equation}
\lim_{k\to\infty}d_0(p_k(\bar{x}),p_k(\bar{y}))=0.
\end{equation}
\end{prop}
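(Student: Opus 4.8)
The plan is to exploit the contractive effect of composing the maps $T_1\circ\dots\circ T_k$ to show that the difference between the two trajectories is controlled by the product $\prod_{i=1}^k s_i$ times the uniform diameter bound $D$. The key point is that although $x_k$ and $y_k$ live in the space $C_k$ and the starting points differ, the only thing that matters is their mutual distance, which never exceeds $D$, and every application of a further map in front shrinks that distance by the corresponding Lipschitz factor.

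First I would fix $k\in\N$ and the two base sequences $\bar x=\{x_i\}$, $\bar y=\{y_i\}$. Since $x_k,y_k\in C_k$ and $\diam(C_k)\le D$, we have $d_k(x_k,y_k)\le D$. Now apply $T_k$: because $T_k$ maps $C_k$ into $C_{k-1}$ and is Lipschitz on $C_k$ with constant $s_k$, we get
\[
d_{k-1}\bigl(T_k(x_k),T_k(y_k)\bigr)\le s_k\, d_k(x_k,y_k)\le s_k D .
\]
Next, $T_k(x_k)$ and $T_k(y_k)$ both lie in $C_{k-1}$, so applying $T_{k-1}$ (Lipschitz on $C_{k-1}$ with constant $s_{k-1}$) multiplies the bound by $s_{k-1}$. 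Iterating this down through $T_{k-2},\dots,T_1$, and using that at each stage the two points remain inside the relevant $C_j$ (so the local Lipschitz estimate applies), yields
\[
d_0\bigl(p_k(\bar x),p_k(\bar y)\bigr)
= d_0\bigl(T_1\circ\dots\circ T_k(x_k),\,T_1\circ\dots\circ T_k(y_k)\bigr)
\le \Bigl(\prod_{i=1}^k s_i\Bigr) D .
\]
Finally, letting $k\to\infty$ and invoking the hypothesis $\prod_{i=1}^k s_i\to 0$ gives $d_0(p_k(\bar x),p_k(\bar y))\to 0$, which is the claim.

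I do not expect any serious obstacle here; the statement is essentially a bookkeeping exercise built on the invariance condition (\ref{TionCi}) and the local Lipschitz bounds (\ref{LipTi}). The one point that needs a little care is the inductive step: one must observe at each stage that the two iterates still belong to the same compact set $C_j$, so that $\mathrm{Lip}_{C_j}(T_j)=s_j$ is the legitimate constant to use — this is exactly what (\ref{TionCi}) guarantees. This is the staircase analogue of Proposition \ref{Equivalence}, the only real difference being that the uniform diameter bound $D$ plays the role that a single compact domain's diameter played in the constant-space setting.
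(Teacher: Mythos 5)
Your proof is correct and is essentially the paper's own argument: both derive the bound $d_0(p_k(\bar{x}),p_k(\bar{y}))\le D\prod_{i=1}^k s_i$ by chaining the Lipschitz estimates through the nested domains $C_k\to C_{k-1}\to\dots\to C_0$ and then letting $k\to\infty$. The only cosmetic difference is that you unwind the composition from the inside out while the paper peels off $T_1, T_2, \dots$ from the outside in.
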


The proof of this result is similar to the proof in \cite{LDV} for backward trajectories. We present it here for the sake of completeness.

\begin{proof}

Let $\bar{x},\bar{y}$ be two base sequences and consider the backward staircase trajectories defined in (\ref{pk}). Using the fact that $T_i$ is a Lipschitz map with Lipschitz constant $s_i$ and that $\diam_i(C_i)\le D$ for $i\in\N$, we have that
\begin{equation}\label{prodsi}
\begin{aligned}
d_0(p_k(\bar{x}),p_k(\bar{y}))\ & \le\ s_1 d_1(T_2 \circ T_3 \circ \dots \circ T_k(x_{k}),T_2 \circ T_3 \circ \dots \circ T_k(y_{k})) \\
& \le s_1s_2d_2((T_3 \circ T_4 \circ \dots \circ T_k(x_{k}),T_3 \circ T_4 \circ \dots \circ T_k(y_{k})) \le\ldots \\
& \le \left(\prod_{i=1}^k s_i\right) d_{k}(x_{k},y_{k})\ \le D\left(\prod_{i=1}^k s_i\right),
\end{aligned}
\end{equation}
from which the result follows.
\end{proof}

Note that the condition $\ \lim\limits_{ k \to \infty} \prod\limits_{i=0}^k s_i =0$ certainly holds if $s_i<s<1$, for all $i\in\N$.

\begin{prop}[\bf Convergence of Backward Staircase Trajectories]\label{STp2}
Let 

\noindent
$\{(X_i,d_i)\}_{i\in\N_0}$, $\{T_i\}_{i\in\N}$, $\{C_i\}_{i\in\N_0}$, and $\{s_i\}_{i\in\N}$ be as above.
If $\sum\limits_{k=1}^\infty \prod\limits_{i=1}^k s_i <\infty$, then for all base sequences $\bar{x}$ the staircase trajectories
$\{p_k(\bar{x})\}$ defined by (\ref{pk}) converge to a unique limit in $C_0$.
\end{prop}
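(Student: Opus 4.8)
The plan is to follow the classical Banach-fixed-point strategy adapted to the staircase setting: show that $\{p_k(\bar{x})\}$ is Cauchy in $X_0$, conclude it converges by completeness, and then use Proposition \ref{Equivalence2} to see the limit is independent of $\bar{x}$. First I would fix a base sequence $\bar{x}=\{x_i\}$ and estimate $d_0(p_k(\bar{x}),p_{k+1}(\bar{x}))$. Writing $p_k = T_1\circ\cdots\circ T_k(x_k)$ and $p_{k+1}=T_1\circ\cdots\circ T_k\big(T_{k+1}(x_{k+1})\big)$, both arguments fed into $T_1\circ\cdots\circ T_k$ lie in $C_k$ (namely $x_k$ and $T_{k+1}(x_{k+1})$, using (\ref{TionCi})), so by applying the Lipschitz bounds (\ref{LipTi}) one layer at a time exactly as in (\ref{prodsi}),
\begin{equation}
d_0(p_k(\bar{x}),p_{k+1}(\bar{x}))\ \le\ \Big(\prod_{i=1}^k s_i\Big)\, d_k\big(x_k, T_{k+1}(x_{k+1})\big)\ \le\ D\prod_{i=1}^k s_i,
\end{equation}
since $\diam(C_k)\le D$.

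Next I would use the triangle inequality to control the tail: for all $m\le n$,
\begin{equation}
d_0(p_m(\bar{x}),p_n(\bar{x}))\ \le\ \sum_{k=m}^{n-1} d_0(p_k(\bar{x}),p_{k+1}(\bar{x}))\ \le\ D\sum_{k=m}^{\infty}\prod_{i=1}^k s_i.
\end{equation}
The hypothesis $\sum_{k=1}^\infty \prod_{i=1}^k s_i<\infty$ forces the right-hand side to tend to $0$ as $m\to\infty$, uniformly in $n$, so $\{p_k(\bar{x})\}$ is a Cauchy sequence in $X_0$. Since $X_0$ is complete, it converges to some $p(\bar{x})\in X_0$; and because each $p_k(\bar{x})\in C_0$ with $C_0$ compact (hence closed), the limit $p(\bar{x})$ lies in $C_0$.

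Finally, for uniqueness of the limit across base sequences, note that the assumption $\sum_{k}\prod_{i=1}^k s_i<\infty$ implies $\prod_{i=1}^k s_i\to 0$, so Proposition \ref{Equivalence2} applies and gives $d_0(p_k(\bar{x}),p_k(\bar{y}))\to 0$ for any two base sequences $\bar{x},\bar{y}$. Passing to the limit, $d_0(p(\bar{x}),p(\bar{y}))=0$, i.e. $p(\bar{x})=p(\bar{y})$; call this common value $p\in C_0$. This completes the proof.

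The only mild subtlety — and the step I would be most careful about — is the one-layer-at-a-time Lipschitz estimate in the first inequality: one must check that at every stage the two points being compared genuinely lie in the set $C_i$ on which $s_i = \mathrm{Lip}_{C_i}(T_i)$ is the valid Lipschitz constant. This is exactly where the invariance property (\ref{TionCi}), $T_i:C_i\to C_{i-1}$, is used, and it is the reason the compact sets $C_i$ (rather than the whole spaces $X_i$) are built into the hypotheses. Everything else is a routine repetition of the argument already recorded in (\ref{prodsi}) and in \cite{LDV}.
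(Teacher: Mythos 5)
Your proof is correct and follows essentially the same route as the paper's: the telescoping estimate $d_0(p_k,p_{k+1})\le D\prod_{i=1}^k s_i$, the triangle inequality to get the Cauchy property, completeness plus compactness of $C_0$ for existence of the limit, and Proposition \ref{Equivalence2} for uniqueness. Your explicit remark that the summability hypothesis implies $\prod_{i=1}^k s_i\to 0$ (so that Proposition \ref{Equivalence2} is indeed applicable) is a small detail the paper leaves implicit, but otherwise the two arguments coincide.
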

\begin{proof}

By (\ref{pk}), the relations in (\ref{prodsi}), and the assumption $\diam (C_i)\le D$ for $i\in\N$, we have that

\begin{equation}\label{prodsi2}
\begin{aligned}
d_0(p_{k+1}(\bar{x}),p_k(\bar{x})) & = d_0(T_1 \circ T_2 \circ \dots \circ T_{k+1}(x_{k+1}),T_1 \circ T_2 \circ \dots \circ T_k(x_{k})) \\
& \le \left(\prod_{i=1}^k s_i\right) d_{k}(T_{k+1}(x_{k+1}),x_{k})\ \le D\left(\prod_{i=1}^k s_i\right).
\end{aligned}
\end{equation}

For $m,k \in \mathbb{N}$ with $m>k$, we obtain in view of (\ref{prodsi2}),
\begin{align}\label{e1}
d_0\big(p_m(\bar{x}),p_k(\bar{x})\big) \le &~ d_0\big(p_m(\bar{x}),p_{m-1}(\bar{x})\big) + \dots\\
& + d_0\big(p_{k+2}(\bar{x}),p_{k+1}(\bar{x}) \big)+d_0\big(p_{k+1}(\bar{x}),p_k(\bar{x}) \big)\nonumber\\
\le &~ D\prod_{i=1}^{m-1}s_i+\dots + D\prod_{i=1}^{k+1}s_i+ D\prod_{i=1}^{k}s_i = D\left(\sum_{j=k}^{m-1} \prod_{i=1}^j s_i\right).
\end{align}
Since $\sum\limits_{k=1}^\infty \prod\limits_{i=1}^k s_i < \infty$,  Eq. (\ref{e1}) asserts that
$d_0\big(p_m(\bar{x}),p_k(\bar{x})\big) \to 0$ as $ k \to \infty$.
That is, $\{p_k(\bar{x}) \}_{k \in \mathbb{N}}$ is a Cauchy sequence in $C_0$ and
due to the completeness of $(X_0,d_0)$ and the compactness of $C_0$, it is convergent to a limit in $C_0$.
The uniqueness of the limit is derived by the equivalence of all trajectories as proved in Proposition \ref{Equivalence2}.
\end{proof}

\begin{rem}[Invariance to Scaling of the Metrics $\{d_i\}$]\label{freedom}

In view of Definition \ref{LipC}, the Lipschitz constant of a mapping from one metric space to another, it is clear that by a proper scaling of the metrics $\{d_i\}$ we can make all the maps $\{T_i\}$ contractive. By scaling the metrics we mean replacing $\{d_i\}$ with $\{\tilde{d}_i\}$, where
\begin{equation}\label{scaling}
\tilde{d}_i(x,y)=\alpha_id_i(x,y) \ \ \forall x,y \in X_i,\ \ \alpha_i>0.
\end{equation}
Such a scaling implies new Lipschitz constants
\begin{equation}\label{newsi}
\tilde{s}_i=\frac{\alpha_{i-1}}{\alpha_i}s_i.
\end{equation}
Choosing $\alpha_0=1$ we obtain

\begin{equation}\label{prodsi4}
 \prod_{i=1}^k \tilde{s}_i =\frac{1}{\alpha_{k}} \prod_{i=1}^k s_i.
\end{equation} 

Let us discuss the implications of the scaling of the metrics. To make the discussion more concrete, consider the case $X_i=\mathbb{R}^i$. A natural possibility is to use the same metric for all these spaces. For example, the metric induced by the maximum norm. If the conditions for asymptotic equivalence or for convergence are not satisfied for these metrics, we can scale the metrics up ($\alpha_i>1$)  in the case of backward trajectories. As we explain below, convergence can be obtained by scaling the metrics.

In view of the proof of Proposition \ref{STp2}, the critical quantity for convergence is the expression
\[
\left(\prod_{i=1}^k s_i\right) d_{k}(T_{k+1}(x_{k+1}),x_{k})
\] 
in equation (\ref{prodsi2}).
Because of (\ref{scaling}) and (\ref{prodsi}), we find that this term is invariant under the scaling of metrics:
\begin{equation}
\left(\prod_{i=1}^k \tilde{s}_i\right) \tilde{d}_{k}(T_{k+1}(x_{k+1}),x_{k})=\left(\prod_{i=1}^k s_i\right) d_{k}(T_{k+1}(x_{k+1}),x_{k}).
\end{equation}
We see that by our method of proof nothing can be gained by scaling the metrics. 
\end{rem}

\begin{rem}[Grouping]\label{grouping}

Sometimes the conditions for the convergence of backward trajectories in Proposition \ref{STp2} are not satisfied but they still may converge. One way of relaxing the conditions is to form groups of maps. Consider the following grouping of $\ell$ maps:
\begin{equation}\label{groupm}
\begin{aligned}
&G^{[\ell]}_j:X_{j\ell}\to X_{(j-1)\ell}, \ \ \ j\in\N, \\
&G^{[\ell]}_j :=T_{(j-1)\ell+1}\circ T_{(j-1)\ell+2} \circ ... \circ T_{j\ell-1} \circ T_{j\ell}.
\end{aligned}
\end{equation}
If for some $\ell$ the conditions for convergence of backward trajectories of $\{G^{[\ell]}_j\}$ are fulfilled, then the backward trajectories of $\{T_i\}$ will converge for some base sequences.
\end{rem}

\begin{cor}[\bf Partial Backward Staircase Trajectories]\label{Pst}

Let us assume that the backward staircase trajectory of $\Sigma_1 := \{T_i\}_{i\in\N}$
converge to a unique limit for all possible base sequences. Let us denote this limit by $\tau_0$.
Consider the partial sequences $\Sigma_m=\{T_i\}_{i=m}^\infty$, $m>1$ and assume that the staircase trajectories of each $\Sigma_m$ converge to a unique limit $\tau_{m-1}$, for all possible base sequences. Then,
\begin{equation}\label{sigma}
\tau_{m-1}=T_m(\tau_{m}),\quad  m\in\N.
\end{equation}
\end{cor}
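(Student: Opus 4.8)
The plan is to prove the self-referential identity \eqref{sigma} by relating the trajectories of $\Sigma_m$ to those of $\Sigma_{m-1}$ via the single map $T_m$, and then passing to the limit using continuity of $T_m$. Concretely, fix $m \in \N$ and let $\bar{x} = \{x_i\}_{i \ge m}$ be any base sequence for $\Sigma_m$, i.e. $x_i \in C_i$ for $i \ge m$. Then $\{p_k^{(m)}(\bar{x})\}_{k \ge m}$ denotes the staircase trajectory of $\Sigma_m$ starting from $\bar{x}$, namely $p_k^{(m)}(\bar{x}) = T_m \circ T_{m+1} \circ \cdots \circ T_k(x_k)$ for $k \ge m$, which by hypothesis converges in $C_{m-1}$ to $\tau_{m-1}$ independently of $\bar{x}$.

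First I would observe that for $k \ge m$,
\begin{equation}
p_k^{(m)}(\bar{x}) = T_m\bigl( T_{m+1} \circ \cdots \circ T_k(x_k)\bigr) = T_m\bigl( q_k(\bar{x})\bigr),
\end{equation}
where $q_k(\bar{x}) := T_{m+1} \circ \cdots \circ T_k(x_k) \in C_m$ is exactly the staircase trajectory of $\Sigma_{m+1} = \{T_i\}_{i \ge m+1}$ with respect to the base sequence $\{x_i\}_{i \ge m+1}$ (for $k = m$ the composition is empty and $q_m(\bar{x}) = x_m$). By the hypothesis applied to $\Sigma_{m+1}$, the sequence $\{q_k(\bar{x})\}_{k \ge m}$ converges in $C_m$ to the unique limit $\tau_m$, again independently of the chosen base sequence. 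Since $T_m : C_m \to C_{m-1}$ is continuous, we may pass to the limit inside $T_m$:
\begin{equation}
\tau_{m-1} = \lim_{k \to \infty} p_k^{(m)}(\bar{x}) = \lim_{k \to \infty} T_m\bigl(q_k(\bar{x})\bigr) = T_m\Bigl( \lim_{k\to\infty} q_k(\bar{x}) \Bigr) = T_m(\tau_m),
\end{equation}
which is precisely \eqref{sigma}. The case $m = 1$, with $\tau_0$ the limit of trajectories of $\Sigma_1$, is included in the same argument.

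There is essentially no serious obstacle here: the result is a soft consequence of continuity of the individual maps together with the \emph{uniqueness} of the limit over all base sequences, which is the content of the standing assumption (and is guaranteed, e.g., by Proposition \ref{STp2} together with Proposition \ref{Equivalence2}). The only point requiring a little care is bookkeeping of indices — making sure that the ``shifted'' trajectory $q_k(\bar{x})$ is genuinely a legitimate staircase trajectory of $\Sigma_{m+1}$ in the sense of Definition \ref{defbt}, so that the convergence hypothesis for $\Sigma_{m+1}$ applies verbatim, and handling the degenerate $k=m$ term where the composition is empty. One should also note that the hypotheses of the corollary are stated for every $\Sigma_m$ with $m > 1$; since \eqref{sigma} links consecutive limits, establishing it for all $m$ simultaneously and then reading it as a recursion shows that the whole family $\{\tau_m\}_{m \ge 0}$ is determined by this self-referential relation.
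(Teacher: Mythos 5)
Your argument is correct: writing $p_k^{(m)}(\bar{x}) = T_m(q_k(\bar{x}))$ with $q_k$ a legitimate staircase trajectory of $\Sigma_{m+1}$, invoking the assumed convergence $q_k \to \tau_m$ (independent of base sequence), and passing the limit through the continuous map $T_m$ is exactly the intended reasoning. The paper itself states Corollary \ref{Pst} without any written proof, treating it as an immediate consequence of Proposition \ref{STp2}; your write-up simply makes that implicit argument explicit, including the harmless $k=m$ bookkeeping, and nothing is missing.
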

\begin{rem}
As we extended the notion of backward trajectories from sequences of maps in a metric space to staircase trajectories, it is also possible to extend the notion of trees of maps on $X$ to a tree of staircase maps.
\end{rem}

\section{Sequences of Function Systems on a Sequence of Metric Spaces}\label{SMFS}

Consider an infinite sequence of complete metric spaces, $\{(X_i,d_i)\}_{i\in\N_0}$
and a related sequence of function systems $\{\mathcal{F}_i\}_{i\in \mathbb{N}}$ defined by
\[
\mathcal{F}_i = \left\{f_{1,i}, f_{2,i}, \dots, f_{n_i,i} \right \},
\]
where $f_{r,i}: X_{i} \to X_{i-1}$, $r=1,2,...,n_i$, are continuous maps. 
The associated set-valued maps are given by
\[
\mathcal{F}_i: \mathbb{H}(X_{i}) \to \mathbb{H}(X_{i-1}); \quad \mathcal{F}_i(A)= \bigcup\limits_{r=1}^{n_i} f_{r,i}(A).
\]

We now assume the existence of non-void compact subsets $\{C_i\}_{i\in\N_0}$, $C_i\subset X_i$ of bounded diameters, $\diam (C_i)\le D$, such that
\begin{equation}\label{fionCi}
f_{r,i}:C_{i}\to C_{i-1},\quad r=1,...,n_i, \ \ \ i\in\N .
\end{equation}

We further assume that each $f_{r,i}$ is Lipschitz with respect to $C_{i}$, $i\in\N$, and we denote its Lipschitz constant by
\begin{equation}\label{LipTi2}
s_{r,i}=\text{Lip}_{C_{i}}(f_{r,i}).
\end{equation}
The contraction factor of $\mathcal{F}_i$ in $(\mathbb{H}(X_i),h)$
is $L_{\mathcal{F}_i}=\max\limits_{r=1,2,\dots, n_i} s_{r,i} =: s_i$ \cite{LDV}.
Let
\begin{equation}\label{barA}
\bar{A}=\{A_{i}\}_{i\in\N},\quad A_i\subset C_i
\end{equation}
be a base sequence.

The following results follow directly from Proposition \ref{STp2} and Corollary \ref{Pst}.

\begin{cor}[\bf Convergence of Staircase SFS]\label{STSFS}

Consider the above staircase structure subject to the above assumptions.
Then, if 
\[
\sum_{k=1}^\infty \prod_{i=1}^k s_i <\infty,
\]
the staircase trajectory of $\Sigma_1=\{\mathcal{F}_i\}_{i\in\N}$,
\begin{equation}\label{stsfs}
p_k(\bar{A})=\mathcal{F}_1 \circ \mathcal{F}_{2} \circ \dots \circ \mathcal{F}_k(A_{k}),\ \ k \in \mathbb{N},
\end{equation}
converges for any base sequence $\bar{A}$ to a unique set (attractor) $P_0\subseteq C_0$.
\end{cor}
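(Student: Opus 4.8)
The plan is to deduce Corollary~\ref{STSFS} from Proposition~\ref{STp2} by reading the sequence of function systems $\{\mathcal{F}_i\}_{i\in\N}$ as a single sequence of maps on the sequence of metric spaces $\{(\mathbb{H}(X_i),h_i)\}_{i\in\N_0}$, where $h_i$ denotes the Hausdorff metric on $\mathbb{H}(X_i)$. The first step is to verify that this lifted structure satisfies all the hypotheses of Proposition~\ref{STp2}. Completeness of each $(\mathbb{H}(X_i),h_i)$ follows from completeness of $(X_i,d_i)$ \cite{B2}. The role of the compact sets $C_i$ is played by $\mathbb{H}(C_i)$, the nonvoid compact subsets of $C_i$; this is a compact subset of $\mathbb{H}(X_i)$ (again by \cite{B2}, since $C_i$ is compact), and (\ref{fionCi}) guarantees $\mathcal{F}_i(\mathbb{H}(C_i))\subseteq \mathbb{H}(C_{i-1})$, i.e.\ the analogue of (\ref{TionCi}). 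The diameter bound $\diam(C_i)\le D$ gives $\diam_{h_i}(\mathbb{H}(C_i))\le D$ as well, so the uniform diameter hypothesis carries over with the same constant.

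The second step is the Lipschitz estimate: each $\mathcal{F}_i:(\mathbb{H}(X_i),h_i)\to(\mathbb{H}(X_{i-1}),h_{i-1})$ is Lipschitz with respect to $\mathbb{H}(C_i)$ with constant $s_i=\max_{r}s_{r,i}$. This is the standard fact that a union of finitely many Lipschitz maps is Lipschitz in the Hausdorff metric with constant equal to the maximum of the individual constants; the reference \cite{LDV} is cited in the excerpt precisely for $L_{\mathcal{F}_i}=\max_r s_{r,i}=s_i$, so I would simply invoke it. With these verifications in place, the hypothesis $\sum_{k=1}^\infty\prod_{i=1}^k s_i<\infty$ is exactly the hypothesis of Proposition~\ref{STp2} applied to the lifted system, and that proposition yields: for every base sequence $\bar A=\{A_i\}_{i\in\N}$ with $A_i\in\mathbb{H}(C_i)$, the backward staircase trajectory $p_k(\bar A)=\mathcal{F}_1\circ\mathcal{F}_2\circ\cdots\circ\mathcal{F}_k(A_k)$ converges in $h_0$ to a limit in $\mathbb{H}(C_0)$, and this limit is independent of $\bar A$ by Proposition~\ref{Equivalence2}. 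Denoting the common limit $P_0$ gives the statement, with $P_0\subseteq C_0$ since $P_0\in\mathbb{H}(C_0)$.

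I would not expect a serious obstacle here; the content is essentially bookkeeping, translating the metric-space statement into the hyperspace setting. The one point that needs a line of care is that the base sequence in Corollary~\ref{STSFS} is specified as $A_i\subset C_i$ rather than $A_i\in\mathbb{H}(C_i)$, so one should note that we take the $A_i$ to be nonvoid and compact (or, since $\mathcal{F}_1\circ\cdots\circ\mathcal{F}_k$ applied to any nonvoid bounded set lands in the compact set $C_0$ after one application when the relevant closure is taken, the distinction is harmless); I would state the convention explicitly at the start of the proof. A second minor point worth a sentence is that the composition $\mathcal{F}_1\circ\cdots\circ\mathcal{F}_k$ of set-valued maps is the same object whether one composes the set-valued maps directly or applies the point maps along all length-$k$ index words and unions — this is the identity $(\mathcal{F}_i\circ\mathcal{F}_{i+1})(A)=\bigcup_{r,r'}f_{r,i}\circ f_{r',i+1}(A)$, which is immediate but should be mentioned so that (\ref{stsfs}) is unambiguous. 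Beyond these remarks the proof is a direct citation of Proposition~\ref{STp2} and Proposition~\ref{Equivalence2}, as the excerpt itself indicates.
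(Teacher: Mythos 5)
Your proposal is correct and matches the paper's intent exactly: the paper gives no separate argument and simply states that the corollary ``follows directly from Proposition~\ref{STp2}'', i.e.\ precisely the lifting to the hyperspaces $(\mathbb{H}(X_i),h)$ with $\mathcal{F}_i$ Lipschitz of constant $s_i=\max_r s_{r,i}$ that you carry out. Your bookkeeping (compactness of $\mathbb{H}(C_i)$, the diameter bound, and the convention that the $A_i$ are nonvoid compact) supplies details the paper leaves implicit, but the route is the same.
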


\begin{cor}[\bf Staircase Self-referential Property]\label{PSTSFS}

Under the conditions of Corollary \ref{STSFS}, the staircase trajectories of $\Sigma_m=\{\mathcal{F}_i\}_{i=m}^\infty$, namely, the sequence of sets
\begin{equation}\label{stsfs2}
p_k(\bar{A})=\mathcal{F}_m \circ \mathcal{F}_{m+1} \circ \dots \circ \mathcal{F}_k(A_{k}),\ \ k\ge m,
\end{equation}
converge for any base sequence to a unique set (attractor) $P_{m-1}\subseteq C_{m-1}$.
Furthermore,
\begin{equation}\label{sigma1}
P_{m-1}=\mathcal{F}_m(P_{m})= \bigcup\limits_{r=1}^{n_m} f_{r,m}(P_{m}),\quad  m \in \N.
\end{equation}

\end{cor}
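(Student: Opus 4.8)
The plan is to reduce Corollary~\ref{PSTSFS} to the already-established Corollary~\ref{STSFS} and Corollary~\ref{Pst}, applied to the set-valued staircase. First I would verify that the hypotheses transfer: the spaces $(\bH(X_i),h)$ are complete metric spaces, the sets $\bH(C_i)$ (compact subsets of $C_i$) are compact in $\bH(X_i)$, each $\mathcal{F}_i$ maps $\bH(C_i)$ into $\bH(C_{i-1})$ by~(\ref{fionCi}), and $\Lip_{\bH(C_i)}(\mathcal{F}_i)=s_i$ by the cited fact $L_{\mathcal{F}_i}=\max_r s_{r,i}=s_i$. The diameters of $\bH(C_i)$ in the Hausdorff metric are bounded by $\diam(C_i)\le D$. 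Thus the staircase structure $\{(\bH(X_i),h)\}$, $\{\mathcal{F}_i\}$, $\{\bH(C_i)\}$, $\{s_i\}$ satisfies exactly the standing assumptions of Section~\ref{SM}.

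Next I would observe that for the tail sequence $\Sigma_m=\{\mathcal{F}_i\}_{i=m}^\infty$ the partial-tail summability condition $\sum_{k=m}^\infty\prod_{i=m}^k s_i<\infty$ follows immediately from $\sum_{k=1}^\infty\prod_{i=1}^k s_i<\infty$, since $\prod_{i=m}^k s_i\le \big(\prod_{i=1}^{m-1}s_i\big)^{-1}\prod_{i=1}^k s_i$ when the earlier factors are nonzero, or more robustly since convergence of $\sum_{k}\prod_{i=1}^k s_i$ forces $\prod_{i=1}^k s_i\to 0$ and hence (after relabelling indices $i\mapsto i-m+1$) the tail series is again a convergent series of products of Lipschitz constants of the shifted staircase. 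Applying Proposition~\ref{STp2} (equivalently Corollary~\ref{STSFS}) to each $\Sigma_m$ gives a unique limit set $P_{m-1}\in\bH(C_{m-1})$, independent of the base sequence $\bar{A}$.

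Finally, the self-referential identity~(\ref{sigma1}) is exactly the statement of Corollary~\ref{Pst} specialised to the set-valued maps: with $\tau_{m-1}:=P_{m-1}$ and $T_m:=\mathcal{F}_m$ we get $P_{m-1}=\mathcal{F}_m(P_m)$. Alternatively, one can prove it directly: given a base sequence $\{A_i\}_{i\ge m+1}$ for $\Sigma_{m+1}$, extend it by $A_m:=$ anything in $\bH(C_m)$ to a base sequence for $\Sigma_m$; then
\[
p_k^{(m)}(\bar A)=\mathcal{F}_m\circ\mathcal{F}_{m+1}\circ\cdots\circ\mathcal{F}_k(A_k)=\mathcal{F}_m\big(p_k^{(m+1)}(\bar A)\big),
\]
and letting $k\to\infty$ on the left gives $P_{m-1}$ while on the right, by continuity of the set-valued map $\mathcal{F}_m$ on $\bH(X_{m})$ (it is Lipschitz with constant $s_m$), the limit is $\mathcal{F}_m(P_m)$. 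The unfolding $\mathcal{F}_m(P_m)=\bigcup_{r=1}^{n_m}f_{r,m}(P_m)$ is just the definition of the set-valued map.

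The only genuine point requiring care—the ``main obstacle'' such as it is—is confirming that $\mathcal{F}_i$ really is Lipschitz with constant $s_i$ as a map between the Hausdorff spaces $\bH(C_i)\to\bH(C_{i-1})$ in this non-autonomous, space-changing setting; but this is precisely the fact recorded (with reference \cite{LDV}) just before~(\ref{barA}), so once that is invoked the corollary is a routine specialisation of the scalar staircase theory. Everything else is bookkeeping: checking completeness of $\bH(X_i)$, compactness of $\bH(C_i)$, the diameter bound, and the trivial inheritance of the summability hypothesis by tails.
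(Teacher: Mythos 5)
Your proposal matches the paper's treatment: the paper gives no separate proof of Corollary~\ref{PSTSFS}, stating only that it ``follows directly from Proposition~\ref{STp2} and Corollary~\ref{Pst}'' applied to the set-valued maps $\mathcal{F}_i$ on the hyperspaces $(\bH(X_i),h)$, which is precisely the reduction you carry out (with rather more care in verifying that the hypotheses transfer than the paper itself supplies). The one caveat, shared with the paper, is that the inheritance of the summability condition by the tails $\Sigma_m$ silently excludes the degenerate case where some $s_i=0$; your primary argument (dividing by $\prod_{i=1}^{m-1}s_i$) is the right one, whereas your ``more robust'' fallback via $\prod_{i=1}^k s_i\to 0$ does not actually cover that case.
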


\begin{rem}
Equation \eqref{sigma1} extends the self-referential property of fractals generated by IFSs to the setting of SFSs \cite{B2}.
\end{rem}

\section{From Subdivision to Staircase SFS}\label{SubdSM}

In this section, we first review the relation between non-stationary subdivision with a mask of fixed size and backward trajectories of a SFS as presented in \cite{LDV}. Then we suggest a framework for dealing with non-stationary subdivision with masks of increasing size.

Starting with a set of control points $p^{0}=\{p_j^0\in \mathbb{R}^m,\ \ j \in \mathbb{Z}\}$ at level $0$,
a non-stationary binary subdivision processes can be written in matrix form as
\begin{equation}\label{short3}
\bar{p}^{k+1}=S^{[k]}\bar{p}^k, \ \ k=1,2,...\ ,
\end{equation}
where $\bar{p}^k$ is the infinite matrix whose rows are the points at level $k$ of the subdivision,
and each $S^{[k]} := S_{a^{[k]}}$ is a ``two-slanted" infinite matrix.
Namely, $(S_{a^{[k]}})_{i,j}=a^{[k]}_{i-2j}$,
where $a^{[k]}$ is a finite sequence of reals termed the mask of the subdivision process.
The refinement rule (5.1) can be written as
$p^{k+1}_i=\sum_j a^{[k]}_{i-2j}p^k_j.$ 

As demonstrated in \cite{DL1}, non-stationary subdivision processes can generate interesting limits which cannot be generated by stationary schemes ($S^{[k]}=S$), e.g., exponential splines. Interpolatory non-stationary subdivision schemes can generate new types of compactly supported orthogonal wavelets as shown in \cite{DKLR}.

\subsection{Non-Stationary Subdivision with Masks of Fixed Size}

It is assumed in \cite{LDV} that the supports of the masks $a^{[k]}$, $|\sigma(a^{[k]})|$, are of the same size which is smaller than the number of initial control points.
For a given finite set of control points, $p^0=\{p^0_j\}_{j=1}^n$, \cite{LDV} defines for each $k$ the two square $n\times n$ sub-matrices of each $S^{[k]}$, $S^{[k]}_1$ and $S^{[k]}_2$, in the same way as suggested for a stationary scheme in \cite{SLG}. Out of the points generated at the first level of the subdivision, the
first $n$ points are the rows $S^{[1]}_1\bar{p}^0$ and the last $n$ points are the rows of $S^{[1]}_2\bar{p}^0$. At the second level we apply
$S^{[2]}_1$ and $S^{[2]}_2$ to the two resulting vectors of points and so on.
The set of all the points generated at level $k$ of the subdivision process is given by
\begin{equation}\label{nspk}
p^k=\bigcup_{i_1,i_2,...,i_k\in \{1,2\}} \rows\big(S^{[k]}_{i_k},...,S^{[2]}_{i_2}S^{[1]}_{i_1}\bar{p}^0\big)\ .
\end{equation}
Here, $\rows(\bar{p})$ denotes the set of points comprised of the rows of $\bar{p}$. If the subdivision is convergent \cite{LDV},
\begin{equation}\label{pktopinfty}
p^k\to p^\infty\quad \text{as $k\to\infty$},
\end{equation}
where $p^\infty$ is the set of points defined by the non-stationary subdivision process starting with $p^0$.

As shown in \cite{LDV}, the same limit is obtained by backward trajectories of a related SFS
$\{\mathcal{F}_k\}$, where
$\mathcal{F}_k=\big\{X; f_{1,k}, f_{2,k} \big \}$
with level dependent maps
\begin{equation}\label{f1f2k}
f_{r,k}(A)=AP^{-1}S^{[k]}_r P,\quad r=1,2.
\end{equation}
Here $P$ is the $n\times n$ matrix defined as in the stationary case:
\begin{enumerate}
\item The first $m$ columns of $P$ constitutes the vector $\bar{p}^0$ of given $n$ control points $p^0$  which are points in $\mathbb{R}^m$.
\item The last column is a column of $1$'s.
\item The rest of the columns are defined so that $P$ is non-singular. We assume here that the control points $p^0$ do not all lie on an $m-1$ hyper plane so that the first $m$ columns of $P$ will be linearly independent and that the column of $1$'s is independent of the first $m$ columns.
\end{enumerate}

\subsubsection{A Simpler SFS Construction for Non-Stationary Subdivision}

Before dealing with masks of increasing size, we present here a simpler SFS replacing the above SFS that was presented in \cite{LDV}. This simplification uses the theory in Section \ref{SMFS} which was not available in \cite{LDV}. The functions in this new SFS operate on row vectors in $\mathbb{R}^n$ by right matrix multiplication.

We use the same sequence of $n\times n$ matrices $\{S^{[k]}_r\}$ defined above and for a row vector  $A\in \mathbb{R}^n$ we define:
\begin{gather}
f_{r,1}\ :\ \mathbb{R}^n \to \ \mathbb{R}^m,\nonumber\\
f_{r,1}(A)=AS^{[1]}_r \bar{p}^0,\quad r=1,2,\label{eq5.5}
\end{gather}
and for $k>1$
\begin{gather}
f_{r,k}\ :\ \mathbb{R}^n \to \ \mathbb{R}^n,\nonumber\\
f_{r,k}(A)=AS^{[k]}_r ,\quad r=1,2.\label{eq5.6}
\end{gather}
The rows of $\bar{p}^0\in \mathbb{R}^{n\times m}$ are the $n$ initial control points in $\mathbb{R}^m$ for the subdivision process.

Let us follow a backward trajectory of the SFS $\Sigma := \{\mathcal{F}_k\}$, $\mathcal{F}_k=\{f_{1,k},f_{2,k}\}$, starting from $A\in \mathbb{R}^n$:
$$\mathcal{F}_k(A)=f_{1,k}(A)\cup f_{2,k}(A)=AS^{[k]}_1\cup AS^{[k]}_2,\ \ \ k>1,$$
and for $j,k>1$
$$\mathcal{F}_{j}(\mathcal{F}_k(A))=f_{1,j}(AS^{[k]}_1\cup AS^{[k]}_2)\cup f_{2,j}(AS^{[k]}_1\cup AS^{[k]}_2).$$
Noting that
\begin{equation}\label{ppm1}
f_{r,j}(AS^{[k]}_i)=AS^{[k]}_iS^{[j]}_r,
\end{equation}
it follows that the set generated at the $k$th step of a backward trajectory of $\Sigma$ is
\begin{equation}\label{ASSP}
\mathcal{F}_{1}\circ\mathcal{F}_2\circ ...\circ\mathcal{F}_{k-1}\circ\mathcal{F}_k(A)=\bigcup_{i_1,i_2,...,i_k\in \{1,2\}}AS^{[k]}_{i_k}...S^{[2]}_{i_2}S^{[1]}_{i_1}\bar{p}^0.
\end{equation}

For a $C^0$-stationary subdivision scheme, it is shown in \cite{SLG} that the maps defined by (\ref{f1f2k}) are contractive on $Q^{n-1}$, the $(n-1)$-dimensional hyperplane (flat) of vectors of the form $(x_1, \ldots, x_{n-1}, 1)$. The new maps \eqref{eq5.5} and \eqref{eq5.6} do not have an evident contraction property and yet a fixed-point theorem holds for  the backward trajectories of $\Sigma$. Instead of the flat $Q^{n-1}$ we consider another flat in $\mathbb{R}^n$, namely,
\begin{equation}\label{flatK}
K^{n-1}=\left\{(x_1,x_2,...,x_n) : \sum_{i=1}^n x_i=1\ \right\}.
\end{equation}
Notice that if the non-stationary scheme satisfies the constant reproduction property at every subdivision level, then all the maps in the SFS map $K^{n-1}$ into itself. That is,
\begin{equation}\label{SK2K}
\forall A\in K^{n-1}, \ \ \ AS_r^{[k]}\in K^{n-1}, \quad r=1,2,\ \ \ k\in\N.
\end{equation}
\begin{defn}[\bf The Set $K^{n-1}_C$]\label{Knm1C}
\[
K^{n-1}_C := \left\{(x_1,x_2,...,x_n) : \sum_{i=1}^n x_i=1,\ \ \ \  |x_i|\le C \ \right\}.
\]
\end{defn}

The following theorem establishes the convergence in the Hausdorff distance $h$ of the backward trajectories of $\Sigma$ to the limit curve of the non-stationary subdivision.

\begin{thm}\label{nsprop2}

Let $\{S_{a^{[k]}}\}$ be a non-stationary $C^0$-convergent subdivision scheme
and let $\Sigma=\{\mathcal{F}_k\}_{k=1}^\infty$ be the SFS defined in \eqref{eq5.5}  and \eqref{eq5.6}.
Then the backward trajectories of $\Sigma$ starting with ${\bf A}\subset K_C^{n-1}$ converge to a unique attractor which constitutes the limit curve (in $\mathbb{R}^m$) of the non-stationary scheme.
\end{thm}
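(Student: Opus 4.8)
The plan is to derive the theorem from Corollary~\ref{STSFS} applied to the staircase $\Sigma=\{\mathcal F_k\}$, and then to identify the resulting attractor with the limit set $p^\infty$ of the subdivision scheme. First I would set up the staircase data: $X_0=\R^m$ and $X_i=\R^n$ for $i\ge 1$, the function systems $\mathcal F_i=\{f_{1,i},f_{2,i}\}$ of \eqref{eq5.5}--\eqref{eq5.6}, and, as candidate compact domains, flats $C_i\subset\R^n$ for $i\ge 1$ constructed below together with $C_0:=\bigcup_r f_{r,1}(C_1)$, a compact subset of $\R^m$ which (once the $C_i$ are known to be invariant) contains every $p^k$ and hence $p^\infty$. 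By the constant-reproduction hypothesis \eqref{SK2K} each $f_{r,k}$ maps the flat $K^{n-1}$ into itself, so it remains, in order to invoke Corollary~\ref{STSFS}, to arrange that the $C_i$ are compact, invariant ($f_{r,k}(C_k)\subseteq C_{k-1}$), and of uniformly bounded diameter, and that $\sum_k\prod_{i=1}^k s_i<\infty$ with $s_i=\max_r\Lip_{C_i}(f_{r,i})$.

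The crucial observation concerns these Lipschitz constants. For $A,B$ in the flat $K^{n-1}$ one has $f_{r,k}(A)-f_{r,k}(B)=(A-B)S^{[k]}_r$, where $A-B$ lies in the difference subspace $V:=\{x\in\R^n:\sum_i x_i=0\}$, which $S^{[k]}_r$ maps into itself (again by \eqref{SK2K}); hence the relevant quantities are the restricted operator norms $\|S^{[k]}_r|_V\|$, that is, the contraction behaviour of the \emph{difference} scheme attached to the matrices $S^{[k]}_r$. Here I would invoke the classical convergence analysis, the non-stationary counterpart of the contractivity criterion for stationary schemes (cf.\ \cite{SLG,DL}): using the running assumption that the mask supports have a common bounded size, $C^0$-convergence forces the products $S^{[j]}_{i_j}\cdots S^{[1]}_{i_1}$, restricted to $V$, to decay geometrically, uniformly over the codes $(i_1,\dots,i_j)$ and over the starting level. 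Consequently there is a norm on $V$, equivalent to the Euclidean one, in which every $S^{[k]}_r|_V$ is a contraction by one common factor $L<1$; transporting it to a translation-invariant metric on $K^{n-1}$ makes each $f_{r,k}$ a genuine $L$-contraction, and one may then take for $C_k$ a closed ball in $K^{n-1}$, centred at a point of the flat and of a radius (independent of $k$) large enough to be invariant — such a ball is compact and of uniform diameter. (Equivalently, one may group the $\mathcal F_i$ into blocks of fixed length as in Remark~\ref{grouping}.) In either case $\sum_k\prod_{i=1}^k s_i<\infty$, so Corollary~\ref{STSFS} applies and the backward trajectories $p_k(\bar A)=\mathcal F_1\circ\cdots\circ\mathcal F_k(A_k)$ converge in the Hausdorff metric $h$ to a set $P_0\subseteq C_0$ that is independent of the base sequence $\bar A$; when grouping is used, Proposition~\ref{Equivalence2} and Remark~\ref{grouping} pass this back to $\Sigma$ itself.

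It remains to identify $P_0$. Running the trajectory from the constant base sequence in which every $A_k$ equals $\mathbf A_0:=\{e_1,\dots,e_n\}$, the rows of the $n\times n$ identity (which lie in $C_k$), formula \eqref{ASSP} together with \eqref{nspk} gives $\mathcal F_1\circ\cdots\circ\mathcal F_k(\mathbf A_0)=\bigcup_{i_1,\dots,i_k}\rows\big(S^{[k]}_{i_k}\cdots S^{[1]}_{i_1}\bar p^0\big)=p^k$, the set of points at level $k$ of the subdivision. Since the scheme is $C^0$-convergent, $p^k\to p^\infty$ in $h$ (see \eqref{pktopinfty}, Definition~\ref{C0conv}, and Remark~\ref{remarkC0}), while by the previous paragraph $p^k\to P_0$; uniqueness of Hausdorff limits yields $P_0=p^\infty$. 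Hence for every nonvoid compact $\mathbf A\subset K^{n-1}_C$ the backward trajectory of $\Sigma$ started from $\mathbf A$ converges in $h$ to the same set $p^\infty$, the limit curve of the non-stationary scheme, which is the assertion.

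The step I expect to be the main obstacle is the middle one: converting the qualitative hypothesis ``$C^0$-convergent'' into the quantitative summability $\sum_k\prod_{i=1}^k s_i<\infty$ together with a single compact invariant flat valid at every level. This is genuinely where the work lies, because the bare maps $f_{r,k}$ are not contractions (the individual norms $\|S^{[k]}_r|_V\|$ are typically $\ge 1$, as the paper itself notes), so one must lean on the difference-scheme characterisation of $C^0$-convergence — and this is exactly where the assumption of uniformly bounded mask supports is needed — to pass to an equivalent metric, or to blocks, in which uniform contractivity is available. Granting that, checking the uniform-diameter invariant domains and carrying out the final identification $P_0=p^\infty$ are routine.
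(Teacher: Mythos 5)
Your route --- establish contractivity of the maps $f_{r,k}$ on the difference subspace $V=\{v\in\mathbb{R}^n:\sum_i v_i=0\}$ of the flat $K^{n-1}$, feed the resulting summability $\sum_k\prod_{i=1}^k s_i<\infty$ into Corollary \ref{STSFS}, and then identify the attractor with $p^\infty$ via the identity-rows base sequence --- is not the paper's route, and the step you yourself flag as ``the main obstacle'' is a genuine gap rather than merely the hard part. For a \emph{non-stationary} scheme, $C^0$-convergence only yields that $\|vS^{[k]}_{i_k}\cdots S^{[2]}_{i_2}S^{[1]}_{i_1}\bar{p}^0\|\to 0$ for each code and each $v\in V$; it supplies no rate, and the stationary equivalence ``convergent $\Leftrightarrow$ some power of the difference scheme is contractive'' does not transfer level-by-level to a non-stationary family. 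Even if the norms of the \emph{products} restricted to $V$ did decay, Corollary \ref{STSFS} requires summability of the products of the \emph{individual} constants $s_i=\max_r\|\,\cdot\,S^{[i]}_r\|_{V\to V}$, and a product of norms can stay bounded away from $0$ while the norm of the product vanishes. Finally, a single norm on $V$ in which \emph{every} $S^{[k]}_r$ contracts by a common factor $L<1$ is a joint-spectral-radius-type condition on the infinite family $\{S^{[k]}_r\}$ that $C^0$-convergence does not imply (and grouping as in Remark \ref{grouping} would again need uniformity over levels). The paper is explicit that the maps \eqref{eq5.5}--\eqref{eq5.6} ``do not have an evident contraction property and yet a fixed-point theorem holds''; your plan reinstates exactly the contraction hypothesis the theorem is designed to avoid.

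The paper's actual proof needs no contraction estimate. It observes that $C^0$-convergence to a continuous limit forces, for each infinite code $\eta$, $S^{[k]}_{i_k}\cdots S^{[2]}_{i_2}S^{[1]}_{i_1}\bar{p}^0\to\bar{q}_\eta$, a matrix all of whose $n$ rows equal the single point $q_\eta=f(x_\eta)$, because these $n$ rows are consecutive points of $p^k$ whose parameters collapse to $x_\eta$. Then for any $A\in K^{n-1}$ the row sum of $A$ is $1$, so $A\bar{q}_\eta=q_\eta$: the limit along each path is the same for every admissible starting vector, and the union over $\eta$ is $p^\infty$. Uniqueness of the attractor thus comes from this explicit identification of the limit, not from a Cauchy or contraction argument. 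Your closing step (the identity-rows base sequence reproduces $p^k$ via \eqref{ASSP} and \eqref{nspk}, hence the attractor must be $p^\infty$) is correct and close in spirit to the paper's conclusion, but it only becomes usable after convergence and base-independence are secured --- which is precisely what your middle step fails to deliver.
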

\begin{proof}
Since $\{S_{a^{[k]}}\}$ converges, it immediately follows from (\ref{ASSP}), in view of (\ref{nspk}), that the backward trajectory of $\Sigma$ initialized with any $A$ converges. We would like to show that all the backward trajectories of $\Sigma$ initialized with an arbitrary set of points ${\bf A}\subset K_C^{n-1}$ converge to the same limit.
Since the subdivision scheme converges to a continuous function, it follows that
an infinite sequence $\eta=\{i_k\}_{k\in\N}$, $i_k\in\{1,2\}$, defines a vector  $\bar{q}_\eta$ of $n$ identical points in $\mathbb{R}^m$,
\begin{equation}\label{qeta}
\bar{q}_\eta=\lim_{k\to \infty}S^{[k]}_{i_k},...,S^{[2]}_{i_2}S^{[1]}_{i_1}\bar{p}^0=
\left( {\begin{array}{cc}
q_\eta \\
. \\
. \\
. \\
q_\eta \\
\end{array} } \right),\quad q_\eta=(q_{\eta,1},...,q_{\eta,m}),
\end{equation}
attached to a parametric value $x_\eta=\sum\limits_{k=1}^\infty (i_k-1)2^{-k}$. (See Remark 6.2 in \cite{LDV}.)
Starting the backward trajectory with any point $A\in {\bf A}$, and following the same sequence $\eta$, it follows from (\ref{ASSP}) that the limit is $A\bar{q}$. Recalling (\ref{flatK}), it now follows that $A\bar{q}_\eta=q_\eta$, $\forall A\in K^{n-1}$.

Hence,
\begin{equation}\label{PSSPa}
\lim_{k\to\infty}\mathcal{F}_{1}\circ\mathcal{F}_2\circ ...\circ\mathcal{F}_{k-1}\circ\mathcal{F}_k(A)=\bigcup_\eta A\lim_{k\to\infty}S^{[k]}_{i_k}...S^{[2]}_{i_2}S^{[1]}_{i_1}\bar{p}^0=\bigcup_\eta q_\eta,
\end{equation}
which is the set of all the limit points generated by the subdivision process starting with initial data $p^0$.
For a bounded set ${\bf A}$ this implies that
\begin{equation}\label{PSSPA}
\lim_{k\to\infty}\mathcal{F}_{1}\circ\mathcal{F}_2\circ ...\circ\mathcal{F}_{k-1}\circ\mathcal{F}_k({\bf A})=
\bigcup_{A\in {\bf A}}
\bigcup_\eta A\lim_{k\to\infty}S^{[k]}_{i_k}...S^{[2]}_{i_2}S^{[1]}_{i_1}\bar{p}^0=\bigcup_\eta q_\eta,
\end{equation}
\end{proof}

Above, we assumed that the number of control points $n$ is larger than the size of the subdivision masks. Therefore, when dealing with
non-stationary subdivision with increasing mask size, the definition of the related SFS should be revised.

\subsection{Non-Stationary Subdivision with Masks of Increasing Size}

Consider a binary non-stationary subdivision scheme with increasing mask size. Such schemes are suggested in the subdivision literature for generating highly smooth limit functions of small support \cite{DL}. Let us denote the support size of the $k$-th level subdivision mask  by $\sigma_k$. For example, the up-function which is a $C^\infty$-function of compact support is generated by a non-stationary subdivision with $\sigma_k=k+1$.
We hereby suggest a ``staircase'' SFS $\{\mathcal{F}_k\}$ where $\mathcal{F}_k=\big\{X_k; f_{1,k}, f_{2,k} \big \}$ with level dependent backward maps defined on row vectors $A\in \mathbb{R}^{n_k}$ at level $k$,
\begin{equation}\label{eq5.14}
f_{r,1}\ :\ \mathbb{R}^{n_0} \to \ \mathbb{R}^m,\ \ 
f_{r,1}(A)=AS^{[1]}_r \bar{p}^0,
\end{equation}
and for $k>1$
\begin{equation}\label{eq5.15}
f_{r,k}:\mathbb{R}^{n_{k}}\to \mathbb{R}^{n_{k-1}},\ \ 
f_{r,k}(A)=AS^{[k]}_r ,\quad r=1,2.
\end{equation}
The vector $p^0\in \mathbb{R}^{n_0\times m}$ represents the set of $n_0$ initial control points in $\mathbb{R}^m$ for the subdivision process.

The matrices $S^{[k]}_r$, $r=1,2$, $k\in\mathbb{N}$, are $n_{k}\times n_{k-1}$ matrices representing the subdivision rules at level $k$.
From a vector of $n_{k-1}$ values, the subdivision at level $k$ generates new $m_k>n_{k-1}$ values at level $k+1$. We set $n_k=\lceil\frac{m_k}{2}\rceil$
$S^{[k]}_1$ generates the first $n_{k}$ values and $S^{[k]}_2$ generates the last $n_{k}$ values.
We observe that by (\ref{eq5.15}) for $k>1$
\begin{equation}\label{FkH}
\mathcal{F}_k: \bH(\mathbb{R}^{n_{k}})\to \bH(\mathbb{R}^{n_{k-1}}).
\end{equation}

\begin{rem}[{Increase of Mask Size}]\label{Growth}
The growth rate  of the support sizes $\{\sigma_k\}$ should be limited so that the subdivision process starting with the initial control points $p^0$ defines a non-void curve in $\mathbb{R}^m$.
\end{rem}

The above construction leads to a sequence of backward maps $\{\mathcal{F}_k\}$ defined on a sequence of metric spaces
$\{ \bH(\mathbb{R}^{n_{k}})\}$ and to the question of convergence of the related backward staircase trajectories.
Let us develop the expression of the backward staircase trajectory.

For $A\subset \mathbb{R}^{n_{k}}$, let
\[
\mathcal{F}_k(A)=f_{1,k}(A)\cup f_{2,k}(A)=AS^{[k]}_1\cup AS^{[k]}_2.
\]

Let $\bar{A}=\{A_i\}$, $A_i\subset  K^{n_{i}-1}$, $i\in \mathbb{N}$,  be any base sequence for a backward staircase trajectory of $\Sigma=\{\mathcal{F}_k\}$. It follows that
\begin{equation}\label{ASSP2}
p_k(\bar{A})=\mathcal{F}_{1}\circ\mathcal{F}_2\circ ...\circ\mathcal{F}_{k-1}\circ\mathcal{F}_k(A_{k})=\bigcup_{i_1,i_2,...,i_k\in \{1,2\}}A_{k}S^{[k]}_{i_k}...S^{[2]}_{i_2}S^{[1]}_{i_1}\bar{p}^0.
\end{equation}

Hence, if the non-stationary subdivision converges, then many staircase trajectory converges.
For example, the backward trajectory with the base sequence elements
\begin{equation}\label{base1}
A_i=(1,0,0,...,0),\ \ \ i\in \mathbb{N}.
\end{equation}
Using Corollary \ref{STSFS} in order to show convergence of all trajectories to the same limit, one should check the related Lipschitz constants $\{s_i\}$. For the test case of a staircase subdivision generating the up-function, the conditions on $\{s_i\}$ stated in Corollary \ref{STSFS} are not satisfied.
It can be shown that, for $\ell$ large enough, a map grouping of order $\ell$ gives contractive maps and hence the backward trajectories converge.
We present below another approach for showing convergence of all staircase backward trajectories to the same limit. This approach extends the idea developed in Theorem \ref{nsprop2} and it can be used for all schemes whose mask sizes $\{\sigma_k\}$ exhibit polynomial growth.

\begin{thm}\label{nspropkv}
Let $\{S_{a^{[k]}}\}$ be a non-stationary $C^1$-convergent subdivision scheme
and let $\Sigma=\{\mathcal{F}_k\}_{k=1}^\infty$ be the SFS defined in \eqref{eq5.14} and \eqref{eq5.15}.
Then the backward trajectories of $\Sigma$ starting with any base sequence $\bar{A}=\{A_k\}$  with $A_k\subset K^{n_k-1}_C$, for some constant $C$, converge to a unique attractor which constitutes the limit curve (in $\mathbb{R}^m$) of the non-stationary scheme.
\end{thm}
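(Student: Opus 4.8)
The plan is to repeat the proof of Theorem~\ref{nsprop2}, upgrading $C^{0}$-convergence to $C^{1}$-convergence in order to keep the now-growing ``windows'' under control. By~\eqref{ASSP2} a backward staircase trajectory with base sequence $\bar{A}=\{A_{k}\}$, $A_{k}\subset K^{n_{k}-1}_{C}$, has the explicit form
\[
p_{k}(\bar{A})=\bigcup_{i_{1},\dots,i_{k}\in\{1,2\}}\ \bigcup_{a\in A_{k}} a\,S^{[k]}_{i_{k}}\cdots S^{[1]}_{i_{1}}\bar{p}^{0},
\]
so it is enough to analyse, for a fixed infinite code $\eta=(i_{1}i_{2}\dots)$ with truncations $\eta_{k}$ and for a single vector $a\in K^{n_{k}-1}_{C}$, the point $a R^{(k)}_{\eta_{k}}\in\R^{m}$, where $R^{(k)}_{\eta_{k}}:=S^{[k]}_{i_{k}}\cdots S^{[1]}_{i_{1}}\bar{p}^{0}$. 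By~\eqref{nspk} the rows $r^{(k)}_{1},\dots,r^{(k)}_{n_{k}}$ of $R^{(k)}_{\eta_{k}}$ form a block of $n_{k}$ consecutive points of the level-$k$ data; since $\{\sigma_{k}\}$, hence $\{n_{k}\}$, is assumed to grow at most polynomially (cf.\ Remark~\ref{Growth}) while the parameter spacing is $2^{-k}$, the parameters of this block lie in an interval of length at most $n_{k}2^{-k}\to 0$ about $x_{\eta}=\sum_{l\ge1}(i_{l}-1)2^{-l}$, with corresponding limit point $q_{\eta}=f(x_{\eta})$ (as in~\eqref{qeta}). The goal is the estimate $\|a R^{(k)}_{\eta_{k}}-q_{\eta}\|\to 0$ uniformly in $\eta$ and in $a$; since $p^{\infty}=\bigcup_{\eta\in B^{[\infty]}}q_{\eta}$ is the set of all limit points of the scheme (compare the proof of Theorem~\ref{nsprop2}), this gives $h(p_{k}(\bar{A}),p^{\infty})\to 0$ for every admissible $\bar{A}$, and then the limit is the same for all base sequences.

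For the estimate I would use both defining properties of $a\in K^{n_{k}-1}_{C}$. Because $\sum_{j}a_{j}=1$,
\[
a R^{(k)}_{\eta_{k}}-q_{\eta}=\sum_{j=1}^{n_{k}} a_{j}\bigl(r^{(k)}_{j}-r^{(k)}_{1}\bigr)+\bigl(r^{(k)}_{1}-q_{\eta}\bigr).
\]
The last bracket tends to $0$ uniformly in $\eta$ by $C^{0}$-convergence together with the Lipschitz continuity of the ($C^{1}$) limit function $f$: $\|r^{(k)}_{1}-q_{\eta}\|\le\sup_{i}\|p^{k}_{i}-f(2^{-k}i)\|+2\Lip(f)\,2^{-k}$. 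For the sum, $\bigl\|\sum_{j}a_{j}(r^{(k)}_{j}-r^{(k)}_{1})\bigr\|\le n_{k}C\,\max_{j}\|r^{(k)}_{j}-r^{(k)}_{1}\|$, and here $C^{1}$-convergence is essential: the scaled first differences $2^{k}(p^{k}_{i+1}-p^{k}_{i})$ converge uniformly (to $f'$) and hence are uniformly bounded by some $M$, so consecutive level-$k$ points are at most $M2^{-k}$ apart and $\max_{j}\|r^{(k)}_{j}-r^{(k)}_{1}\|\le M n_{k}2^{-k}$. Collecting terms, $\|a R^{(k)}_{\eta_{k}}-q_{\eta}\|\le CM\,n_{k}^{2}2^{-k}+o(1)$ with the $o(1)$ uniform in $\eta$ and $a$, and $n_{k}^{2}2^{-k}\to 0$ precisely because $\{\sigma_{k}\}$ grows polynomially. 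Passing to the double union turns this into $h(p_{k}(\bar{A}),p^{\infty})\le CM\,n_{k}^{2}2^{-k}+o(1)\to 0$.

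Two comments on the setup. The argument deliberately avoids Corollary~\ref{STSFS}: for growing masks the summability $\sum_{k}\prod_{i\le k}s_{i}<\infty$ of the Lipschitz constants $s_{i}=\Lip(\mathcal{F}_{i})$ fails (as already noted for the up-function), which is exactly why a direct, matrix-structure-based argument replaces the abstract contraction one. Moreover the hypotheses are tight: ``$C^{0}$'' already yields $r^{(k)}_{1}\to q_{\eta}$ but is too weak to absorb the prefactor $n_{k}C$ coming from the possibly large $\ell^{1}$-norm of $a$, whereas ``$C^{1}$'' supplies both the Lipschitz bound on $f$ and the uniform $O(2^{-k})$ control on consecutive data points, after which polynomial growth of $\{\sigma_{k}\}$ is exactly what forces $n_{k}^{2}2^{-k}\to 0$. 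I expect the main obstacle to be the careful justification of these quantitative consequences of $C^{1}$-convergence in the \emph{non-stationary}, growing-mask setting — the uniform-in-$k$ boundedness of the scaled first differences, and the identification of the rows of $S^{[k]}_{i_{k}}\cdots S^{[1]}_{i_{1}}\bar{p}^{0}$ with $n_{k}$ consecutive level-$k$ points — after which the interchange of $\bigcup$ and $\lim$ is immediate from the uniformity in $\eta$ (alternatively, one first proves $h(p_{k}(\bar{A}),p^{k})\to 0$ and invokes $p^{k}\to p^{\infty}$).
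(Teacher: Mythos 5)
Your proposal is correct and follows essentially the same route as the paper's proof: both replace the fixed-dimension relation \eqref{qeta} by the estimate that each row of $S^{[k]}_{i_k}\cdots S^{[1]}_{i_1}\bar{p}^0$ differs from $q_\eta$ by $n_k\mathcal{O}(2^{-k})$ (via the $C^1$-bound on first differences), use $\sum_j a_j=1$ and $|a_j|\le C$ to absorb an extra factor $n_k$ when applying $A_k$, and conclude from the polynomial growth of $\{\sigma_k\}$ that $(n_k)^2\mathcal{O}(2^{-k})\to 0$, uniformly in $\eta$, before passing to the union \eqref{ASSP2}. Your write-up is in fact somewhat more explicit than the paper's about where $C^1$-convergence enters (uniform boundedness of the scaled differences $2^k(p^k_{i+1}-p^k_i)$) and about the algebraic-growth assumption on $\sigma_k$, which the paper also invokes inside its proof.
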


\begin{proof}
The idea of the proof is similar to that of the proof of Theorem \ref{nsprop2}. However,  we do not have a relation like (\ref{qeta}) since the dimension of $S^{[k]}_{i_k}...S^{[2]}_{i_2}S^{[1]}_{i_1}\bar{p}^0$ increases with $k$. Let us assume that $\{\sigma_k\}$ is growing algebraically with $k$. 
Since the subdivision scheme converges to a continuous function, it follows that
an infinite sequence $\eta=\{i_k\}_{k\in\N}$, $i_k\in\{1,2\}$, defines a limit point 
$q_\eta\in \mathbb{R}^m$ attached to a parametric value $x_\eta=\sum\limits_{k=1}^\infty (i_k-1)2^{-k}$. Let us denote by $\bar{q}_\eta^{n_k}$ the vector of $n_k$ identical points in $\mathbb{R}^m$,
\begin{equation}\label{qetank}
\bar{q}_\eta^{n_k}=
\left( {\begin{array}{cc}
q_\eta \\
. \\
. \\
. \\
q_\eta \\
\end{array} } \right),\quad q_\eta=(q_{\eta,1},...,q_{\eta,m}).
\end{equation}
For $k$ large enough and with $\eta_k=\{i_j\}_{j=1}^k$, $i_j\in\{1,2\}$, the vector of $n_k$ points in $\mathbb{R}^m$
\[
\bar{p}_{\eta_k}=S^{[k]}_{i_k}...S^{[2]}_{i_2}S^{[1]}_{i_1}\bar{p}^0\ ,
\]
is close to the smooth limit curve generated by the subdivision scheme.
To estimate how close we use here some observations from the theory of subdivision \cite{DL}.

For a $C^1$-convergent scheme it can be shown that
$$\|q_{\eta_k}-q_{\eta}\| = \cO(2^{-k}),$$
$$\|(\bar{p}_{\eta_k})_{i,\cdot}-q_{\eta_k}\| = n_k\cO(2^{-k}),\ \ i=1,...,n_k.$$
Hence, we obtain
\begin{equation}\label{O2k}
\bar{p}_{\eta_k}=\bar{q}_{\eta}^{n_k}+n_k\cO(2^{-k}),\ \ \quad\text{as}\  k\to \infty.
\end{equation}
Since $A_k\in K^{n_k-1}_C$, it follows that $A_k\bar{q}_{\eta}^{n_k}=q_\eta$ and an additional $n_k$ factor enters when we apply $A_k$ to the remainder term in (\ref{O2k}):
\begin{equation}\label{O2ka} 
A_k\bar{p}_{\eta_k}=q_\eta+(n_k)^2 \cO(2^{-k}) ,\quad\text{as $k\to \infty$}.
\end{equation}
Now, $\{\sigma_k\}$ is growing algebraically with $k$ and  $n_k=n_0+\sigma_k-\sigma_0$ and thus for any base sequence $\bar{A}=\{A_k\}$ with $A_k\subset K^{n_k-1}_C$ we get
\begin{equation}\label{Aktoeta}
A_{k}S^{[k]}_{i_k}...S^{[2]}_{i_2}S^{[1]}_{i_1}\bar{p}^0\to q_\eta,\quad\text{as $k\to \infty$}.
\end{equation}
The proof now follows by using (\ref{ASSP2}).
\end{proof}

\section{Non-Uniform Subdivision}\label{NUSTA}

Another class of subdivision processes includes non-uniform schemes in which the subdivision rules depend on the location. An example of such schemes is dealt with in \cite{DLY}.
The approach used in \cite{LDV} cannot be used for non-uniform schemes not even those with a fixed mask size. The introduction of the notion of staircase trajectories opens up new possibilities. Another option is to use the structure of trees of maps.  

Let us recall that the challenge is to show that the limit of the subdivision process can be represented as the unique fixed point of some iterative process starting from a certain set of starting points (or base sequences). Below, we present two ways of approaching the non-uniform case.

\subsection{First Approach - Using Staircases of Maps}

Consider a general {\bf linear} subdivision process, univariate or multivariate, stationary or non-stationary, uniform or non-uniform. Here we restrict he discussion to univarate binary subdivision. Starting with $n_0$ control points $p^0\in\mathbb{R}^m$, there is  an $n_1\times n_0$ matrix $S^{[1]}$ that generates all the $n_1$ points derived at level 1 of the subdivision process
\[
\bar{p}^1=S^{[1]}\bar{p}^0.
\]
Inductively, denote by $S^{[k]}$ the $n_k\times n_{k-1}$ matrix which represents the subdivision rules transforming the $n_{k-1}$ points $p^{k-1}$ attained at level $k-1$ to the $n_k$ points $p^k$ (in $\mathbb{R}^m$) at level $k$:
\[
\bar{p}^k=S^{[k]}\bar{p}^{k-1}.
\]
Note that here $n_k=\cO(2^k)$.
We have a sequence of metric spaces $\{\R^{n_k}\}$ (endowed with say the discrete $L^2$-metric) and a sequence of maps between the metric spaces. If the subdivision process is known to be $h$-convergent,
then the forward trajectory starting with $p^0$ converges to $p^\infty\subset\mathbb{R}^m$.
However, different initial vectors yield different limit points. As done in the previous section, let us view the matrices $\{S^{[k]}\}$ as backward maps.
For $k=1$,
\begin{equation}
S^{[1]}\ :\ \mathbb{R}^{\ell\times n_1}\ \to \ \mathbb{R}^{\ell\times m},\ \ \
A\in \mathbb{R}^{\ell\times n_1} \to AS^{[1]}\bar{p}^0 \in \mathbb{R}^{\ell\times m},
\end{equation}
and for $k>1$, 
\begin{equation}
S^{[k]}\ :\ \mathbb{R}^{\ell\times n_k}\ \to \ \mathbb{R}^{\ell\times n_{k-1}},\ \ \
A\in \mathbb{R}^{\ell\times n_k} \to AS^{[k]} \in \mathbb{R}^{\ell\times n_{k-1}}.
\end{equation}
Of course, the above maps are only well defined for linear subdivision schemes.

\subsubsection{The Choice of a Base Sequence}

The question now is: For which class of base sequences $\bar{A}=\{A_k\}$
do all backward trajectories of the above maps converge to $p^\infty$? The choice
\[
A_k=I_{n_k\times n_k},
\]
the identity matrix in $\R^{n_k\times n_k}$, generates the sequence $\{\bar{p}^k\}$ as its trajectory. Clearly, the backward trajectory with respect to this base sequence converges to $p^\infty$. 

\begin{defn}\label{Krhom1}

Let $\rho$ be a positive integer and let $C$ be a positive constant. For $n_k>\rho$, we denote by $K^{n_k,\rho-1}_C$ the set of all row vectors of length $n_k$ of the form
\begin{equation}\label{v}
(0,0,...,0,v,0,0,...,0),\ \ \ v\in K^{\rho-1}_C.
\end{equation}
\end{defn}

\begin{defn}[\bf Elements of the Base Sequence]\label{Arhom1}

Let $\rho$ be a positive integer, $C$ a positive constant, and $n_k>\rho$. We denote by $A^{\ell\times n_k,\rho}_C$ the set of all $\ell\times n_k$ matrices with rows in  $K^{n_k,\rho-1}_C$ such that the sum of the rows does not have zero elements.
\end{defn}

 Let us further assume that the subdivision process converges to a continuous limit. Then, in the spirit of Theorem \ref{nsprop2}, we can prove the following.
 
\begin{thm}\label{nu1}
Consider a non-uniform $C^\nu$-convergent subdivision scheme, $\nu>0$,
and let $\{S^{[k]}\}_{k=1}^\infty$ be the backward maps defined above.
Then the backward trajectories starting with any base sequence $\bar{A}=\{A_k\}$  with $A_k\in A^{n_k\times n_k,\rho}_C$ for some fixed $C$ and $\rho$, converge to a unique attractor whose rows constitute the limit curve (in $\mathbb{R}^m$) of the non-uniform scheme.
\end{thm}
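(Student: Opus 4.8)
The plan is to imitate the proof of Theorem~\ref{nsprop2}, bypassing the abstract staircase machinery of Section~\ref{SM} and instead exploiting the convergence of the subdivision scheme directly. First I would unwind the backward trajectory. Since the map at level $k$ acts by $A\mapsto AS^{[k]}$ (and by $A\mapsto AS^{[1]}\bar p^0$ at level $1$), composing from the base point $A_k$ gives
\begin{equation*}
p_k(\bar A)=A_k\,S^{[k]}S^{[k-1]}\cdots S^{[1]}\bar p^0=A_k\,\bar p^k ,
\end{equation*}
where $\bar p^k$ is the $n_k\times m$ array whose rows are the control points $p^k$ at level $k$. By Definitions~\ref{Krhom1} and~\ref{Arhom1}, each row of $A_k$ has the form $(0,\dots,0,v,0,\dots,0)$ with $v\in K^{\rho-1}_C$, so every row of $p_k(\bar A)$ is an affine combination, with coefficients summing to $1$ and bounded in modulus by $C$, of $\rho$ \emph{consecutive} rows of $\bar p^k$; moreover the ``windows'' of the rows of $A_k$ cover all of $\{1,\dots,n_k\}$ because the sum of the rows of $A_k$ has no zero entry. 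The goal is then to prove $h\big(\rows(p_k(\bar A)),p^\infty\big)\to 0$, where $p^\infty$ is the limit curve of the non-uniform scheme; this forces every backward trajectory to converge to one and the same limit, which gives uniqueness of the attractor.

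For the ``no row escapes'' inclusion I would use $C^\nu$-convergence with $\nu>0$: the rows of $\bar p^k$ converge uniformly to points of $p^\infty$, and consecutive rows satisfy $\|p^k_{j+1}-p^k_j\|=\cO(2^{-k\min(1,\nu)})$ uniformly in $j$ — for $\nu$ large this is just the uniform continuity of $f$ in Definition~\ref{C0conv} together with $p^k_j\approx f(t^k_j)$, where $t^k_j$ is the parameter attached to $p^k_j$. Hence a $\rho$-window affine combination with coefficients bounded by $C$ differs from, say, its left-most entry by at most $\rho C\cdot\cO(2^{-k\min(1,\nu)})$, and that entry is $\cO(2^{-k\min(1,\nu)})$-close to $p^\infty$. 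Thus $\sup_{q\in\rows(p_k(\bar A))}\operatorname{dist}(q,p^\infty)\to 0$. For the reverse inclusion I would fix $q\in p^\infty$, write $q=\lim_k p^k_{j_k}$ for a suitable sequence of indices $j_k$ (with the parameters $t^k_{j_k}$ converging to the parameter of $q$), and invoke the covering property of $A_k$: some row $i(k)$ has a nonzero entry in position $j_k$, so its window contains $j_k$, whence row $i(k)$ of $p_k(\bar A)$ lies within $\rho C\cdot\cO(2^{-k\min(1,\nu)})$ of $p^k_{j_k}$ and therefore tends to $q$. This yields $\sup_{q\in p^\infty}\operatorname{dist}(q,\rows(p_k(\bar A)))\to 0$, and combining the two estimates gives $h(\rows(p_k(\bar A)),p^\infty)\to 0$. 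Since the limit $p^\infty$ is independent of $\bar A$, the attractor is unique and its rows constitute the limit curve.

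The step I expect to be the main obstacle is making the covering/localization argument uniform: one must pair the windows furnished by the condition ``the sum of the rows of $A_k$ has no zero elements'' with a family of parametric values that becomes dense in the parameter interval of the non-uniform scheme, and do so with constants independent of $k$; here the bound $|x_i|\le C$ on the affine coefficients is precisely what keeps the window-collapse error proportional to the (vanishing) point spacing instead of being amplified. A secondary, more bookkeeping-type point is to pin down what ``$C^\nu$-convergent'' should mean for a non-uniform scheme (the only consequence actually needed being the uniform vanishing of $\max_j\|p^k_{j+1}-p^k_j\|$), after which the argument runs exactly as in Theorem~\ref{nsprop2} and no recourse to Proposition~\ref{Equivalence2} or to verifying $\sum_k\prod_i s_i<\infty$ is required.
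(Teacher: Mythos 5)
Your proposal follows essentially the same route as the paper's own (much terser) proof: both reduce the trajectory to $A_k\bar p^k$, observe that each row of $A_k$ acts as an affine combination (coefficients summing to $1$, bounded by $C$) on a window of $\rho$ consecutive rows of $\bar p^k$ which tend to constant vectors by the continuity of the limit, and use the no-zero-entries condition on the row sums to get density of the resulting points in $p^\infty$. Your write-up is in fact more detailed than the paper's sketch, and correctly flags the uniformity of the covering argument as the point that still needs care.
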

\begin{proof}
Unlike the case in Theorem \ref{nsprop2}, here the vectors of points $\bar{p}^k$ are of fast increasing length, and as such, even if the subdivision is convergent, the vectors are not converging to a vector with identical rows. However, let us consider any partial vector $w$ of $\bar{p}^k$ containing $\rho$ consecutive rows of
$\bar{p}^k$. Such vectors tend to constant vectors as $k\to \infty$. Left multiplication of 
$\bar{p}^k$ by a row vector of the form (\ref{v}) corresponds to left multiplication of   $w$ by $v$. The result, as in Theorem \ref{nsprop2}, would be a point in $\mathbb{R}^m$ approaching
$p^\infty$. The rows of the matrices in the base sequences are all of the form (\ref{v}).
The condition that  the sum of the rows does not have zero elements ensures that 
$A_k\bar{p}^k$ generates points in $\mathbb{R}^m$ which tend to a dense set of points in $p^\infty$.
\end{proof}

\subsection{Second Approach - Using Trees of Maps}

To present the idea, let us consider a univariate binary non-uniform subdivision scheme of fixed mask size.
Being non-uniform means that the subdivision masks depend upon the geometric location of the control points, or, on the parametric correspondence of the control points at all levels.
In other words, in order to generate the $n_k$ points $\{p^k\}$ at level $k$, we use $n_k$ different masks.
As in the uniform case, we would like to attach to the subdivision process a related SFS, a binary SFS.
We already know how to define a function system to a given linear subdivision rule. 
The question is, how to introduce a parametric correspondence into the definition of the system function systems and to the SFS iterations?

As in Section \ref{TM}, we denote by $B^{[k]}$ the set of binary sequences of length $k$, i.e.,
\begin{equation}\label{Ek}
B^{[k]}=\{\eta_k : \eta_k=\{i_j\}_{j=1}^k,\ i_j\in\{1,2\}\}.
\end{equation}
A parametric location at level $k$ is determined by a binary sequence $\eta_k\in B^{[k]}$. In the non-uniform case the matrices $S_1^{[k]}$ and $S_2^{[k]}$ are location dependent.
Instead of two matrices at each level we have $2^k$ matrices at level $k$, recursively defined as follows.
\begin{defn}[\bf Recursive Definition of the Matrices $S^{[k]}_{\eta_k}$]\label{Sketak}

$S_1^{[1]}$ and $S_2^{[1]}$ are defined as in the uniform case, i.e., given the $n_{0}$ points, $p^0$, at level zero, $S^{[1]}_1$ generates the first $n_{0}$ points at level 1 and $S^{[1]}_2$ generates the last $n_{0}$ points. We denote by $p_{\eta_k}$ the $n_0$ points at level $k$ attached to a parametric location $\eta_{k}$:
\begin{equation}\label{petak2}
\bar{p}_{\eta_k}=S^{[k]}_{i_1i_2...i_k}\cdot\cdot\cdot S^{[3]}_{i_1i_2i_3}S^{[2]}_{i_1i_2}S^{[1]}_{i_1}\bar{p}^0\ .
\end{equation}
For $k>1$, given the vector of points $\bar{p}_{\eta_k}$, we define two $n_0\times n_0$ matrices $S^{[k+1]}_{i_1i_2...i_k1}$ and $S^{[k+1]}_{i_1i_2...i_k2}$. The first one generates the first $n_0$ points resulting from $\bar{p}_{\eta_k}$ and the second generates the last $n_0$ points resulting from $\bar{p}_{\eta_k}$.
\end{defn}

We hereby define a tree of maps $\{f_{\eta_k}\}$: For $k=1$, let 
\begin{equation}\label{f1f2-1v} 
f_{r}\ :\ \mathbb{R}^{n_0} \to \ \mathbb{R}^m,\ \ \
f_{r}(A) := AS^{[1]}_r \bar{p}^0,\quad r=1,2,
\end{equation}
where $\bar{p}^0\in \mathbb{R}^{n_0\times m}$ represents the set of $n_0$ initial control points in $\mathbb{R}^m$ for the subdivision process. 

For $k>1$, let
\begin{equation}\label{f1f2-kv}
f_{\eta_k}\ :\ \mathbb{R}^{n_0} \to \ \mathbb{R}^{n_0},\ \ \
f_{\eta_k}(A) := AS^{[k]}_{\eta_k}.
\end{equation}

Consider the space $X :=\mathbb{R}^{n_0}$ and the infinite binary tree of maps on $X$ defined as above in (\ref{f1f2-1v}) and (\ref{f1f2-kv}).
Note that for $k>1$, $f_{\eta_k} : X\to X$, while for $k=1$, $f_{\eta_k} : X\to \mathbb{R}^m$.
Thus, convergent backward trajectories along paths in the tree converge to points in $\mathbb{R}^m$.
Using the same steps as in the proof of Theorem \ref{nsprop2} for each path of the tree, the following theorem holds. 

\begin{thm}\label{nu2}
Consider a linear binary non-uniform $C^0$-convergent subdivision scheme
and let $TM$ be the tree of maps defined above.
Then the backward trajectories along all paths in the tree converge for any $A\in K^{n_0-1}$, and the attractor $U_{TM}$ of the tree constitutes the limit curve (in $\mathbb{R}^m$) of the non-uniform scheme.
\end{thm}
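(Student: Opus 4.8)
The plan is to follow, path by path, the argument used for Theorem \ref{nsprop2}; note that Proposition \ref{PTFS} cannot be invoked directly, since the maps $f_{\eta_k}$ carry no evident contraction, so convergence must be extracted from the convergence of the subdivision scheme itself. Fix an infinite code $\eta=(i_1i_2i_3\dots)\in B^{[\infty]}$ and the path $P_\eta$, along which the maps are $f_{\tau_1\eta},f_{\tau_2\eta},\dots$. The first step is to telescope the $k$-th backward iterate: using $f_{\tau_j\eta}(A)=AS^{[j]}_{\eta_j}$ for $j>1$, $f_{\tau_1\eta}(A)=AS^{[1]}_{i_1}\bar{p}^0$, and associativity of matrix multiplication,
\[
f_{\tau_1\eta}\circ\cdot\cdot\cdot\circ f_{\tau_{k-1}\eta}\circ f_{\tau_k\eta}(A)=A\,S^{[k]}_{\eta_k}\cdot\cdot\cdot S^{[2]}_{\eta_2}S^{[1]}_{i_1}\bar{p}^0=A\,\bar{p}_{\eta_k},
\]
with $\bar{p}_{\eta_k}$ the window of $n_0$ level-$k$ points attached to the parametric location $\eta_k$, as in (\ref{petak2}).

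The second step is to pass to the limit. Since the non-uniform scheme is $C^0$-convergent, exactly as in the proof of Theorem \ref{nsprop2} the $n_0$ rows of $\bar{p}_{\eta_k}$ are samples of the continuous limit function at parameters collapsing onto $x_\eta=\sum_{k\ge1}(i_k-1)2^{-k}$, so $\bar{p}_{\eta_k}\to\bar{q}_\eta$, the $n_0\times m$ matrix all of whose rows equal $q_\eta:=f(x_\eta)$ (cf.\ Remark 6.2 in \cite{LDV}). Hence $A\bar{p}_{\eta_k}\to A\bar{q}_\eta$ for every $A$, and when $A\in K^{n_0-1}$ the row sum of $A$ is $1$ while the rows of $\bar{q}_\eta$ are identical, so $A\bar{q}_\eta=q_\eta$. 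Thus the limit (\ref{limitpath}) along $P_\eta$ exists and equals $q_\eta$, independently of $A\in K^{n_0-1}$; by Definition \ref{convergentTM} the tree is convergent, with attractor $U_{TM}=\bigcup_{\eta\in B^{[\infty]}}\gamma(\eta)=\bigcup_{\eta\in B^{[\infty]}}q_\eta$. Since every parameter value in the relevant interval has a binary expansion $\sum_{k\ge1}(i_k-1)2^{-k}$, the points $q_\eta$ exhaust the range of $f$, so $U_{TM}$ is precisely the limit curve (in $\mathbb{R}^m$) of the non-uniform scheme.

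Two points deserve care. First, one should check that $K^{n_0-1}$ is a common invariant domain for the tree, which follows from constant reproduction at every subdivision level (it forces $AS^{[k]}_{\eta_k}\in K^{n_0-1}$), so that $\gamma(\eta)$ is well defined and Proposition \ref{CTFS} is applicable. The main obstacle, however, is the claim that the $n_0$-point window $\bar{p}_{\eta_k}$ actually converges to a constant vector: for a genuinely non-uniform scheme this requires making explicit the parametric correspondence of the windowed points at each level (the analogue of $p^k_j\leftrightarrow j2^{-k}$), showing that the window's parametric diameter tends to $0$, and using uniform continuity of $f$; controlling this uniformly enough over all $\eta$ to conclude that $U_{TM}$ equals the full limit curve, rather than only a dense subset of it, is the delicate part of the argument.
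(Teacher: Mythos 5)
Your proposal is correct and follows exactly the route the paper intends: the paper gives no separate argument for Theorem \ref{nu2}, stating only that it follows ``using the same steps as in the proof of Theorem \ref{nsprop2} for each path of the tree,'' which is precisely your path-by-path telescoping to $A\bar{p}_{\eta_k}$ and the limit $A\bar{q}_\eta=q_\eta$ for $A\in K^{n_0-1}$. Your closing caveats (invariance of $K^{n_0-1}$ via constant reproduction, and the need to control the parametric diameter of the $n_0$-point window uniformly in $\eta$) identify genuine details that the paper leaves implicit rather than gaps in your own argument.
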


\section{Summary and Conclusions}
We considered countable sequences of maps on a complete metric space $(X,d)$ and introduced the novel concept of an infinite tree of maps $X\to X$. Conditions for the convergence of these trees of maps to a unique attractor were derived. More generally, we investigated countable sequences of complete metric spaces $\{(X_i,d_i)\}$ and an associated countable sequence of maps $T_i :X_i\to X_{i-1}$. We provided conditions for the convergence of the backward trajectories $\Psi_k = T_1\circ \cdots \circ T_k$, $k\in \N$, to a unique attractor. As an example, we considered trees of maps arising from iterated function systems and constructed a new class of fractals that are both scale dependent and location dependent. Finally, we exhibited the connections to non-stationary and non-uniform subdivision schemes and showed that this new approach is capable of linking all types of linear subdivision schemes to attractors of sequences of function systems.

\end{document}